\documentclass[12pt, reqno]{amsart}

\usepackage{amsmath,amsthm,amssymb}
\usepackage{comment}

\topmargin 3mm
\evensidemargin 0mm
\oddsidemargin 0mm
\textwidth 160mm
\textheight 200mm

\newtheorem{thm}{Theorem}[section]

\newtheorem{lem}[thm]{Lemma}
\newtheorem{cor}[thm]{Corollary}

\newtheorem*{mainthm*}{Theorem}

\newtheorem*{maincor*}{Corollary}

\theoremstyle{definition}

\newtheorem*{claim*}{Claim}

\newcommand{\dsp}{\displaystyle}
\newcommand{\mathotimes}{\mathbin{\overline{\otimes}}}

\newcommand{\Aut}{{\rm Aut}}
\newcommand{\Int}{{\rm Int}}

\usepackage{comment}
\usepackage{mathtools}
\usepackage{braket}

\numberwithin{equation}{section}

\allowdisplaybreaks[2]

\title[Fullness of von Neumann algebras associated with equivalence relations]{On fullness of von Neumann algebras associated with non-singular Borel equivalence relations}
\author[Y. Michimoto]{Yuta Michimoto}
\date{\today}

\begin{document}
\begin{abstract}
It is shown by Houdayer--Isono that a group measure space von Neumann algebra is a full factor 
if the group is countable discrete and bi-exact, and the action is strongly ergodic, essentially free and non-singular.
Recently, 
bi-exactness for locally compact groups was introduced by Brothier--Deprez--Vaes.
In this paper, 
we will show that  Houdayer--Isono type result holds for bi-exact locally compact groups. 
\end{abstract}
\maketitle
\section{Introduction}
Fullness of von Neumann algebras was introduced by Connes in \cite{Con74}. 
We recall that a von Neumann algebra $M$ is called \emph{full} 
if the inner automorphism group $\Int (M)$ is closed in the automorphism group $\Aut (M)$ with respect to the $u$-topology. 
For any factor $M$ with separable predual, 
$M$ is full if and only if every norm bounded central  sequence of $M$ is trivial.
For example, 
the non-abelian free group factor $L(\mathbb{F}_n)$ is full, and
non-type I amenable factors are never full.
More generally, 
Effors's result (\cite{Ef73}) shows that the group von Neumann algebra
$L(\Gamma)$ is a full factor for any non-inner amenable countably infinite group $\Gamma$. 
On group measure space von Neumann algebras,
Choda showed in \cite{Ch81} that 
the group measure space von Neumann algebra $L^{\infty}(X) \rtimes \Gamma$ is a full factor 
of type I\hspace{-.1em}I$_1$
for every strongly ergodic essentially free probability measure preserving action 
$\Gamma \curvearrowright (X, \mu)$ 
of every non-inner amenable countably infinite group $\Gamma$. 
Recall that a non-singular action is called \emph{strongly ergodic} 
if every almost invariant sequence of Borel subsets is trivial. 
In \cite[Theorem C]{HI15}, 
Houdayer--Isono showed that 
$L^{\infty}(X) \rtimes \Gamma$ is a full factor (can be of type I\hspace{-.1em}I\hspace{-.1em}I)
for every strongly ergodic essentially free non-singular action 
$\Gamma \curvearrowright (X, \mu)$ 
of every bi-exact (i.e. exact and has property (S) ) countably infinite group $\Gamma$ on a non-atomic probability measure space $(X, \mu)$. 
Recently, 
property (S) was introduced to locally compact groups 
by Brothier--Deprez--Vaes (\cite{BDV18}). 
Then, the following question naturally arises.
Does Houdayer--Isono type result hold for locally compact groups having property (S)?
In this paper, we give an affirmative answer to this question.
For this proof,
we will use a cross section equivalence relation $\mathcal{R}_1$
associated to the action $G \curvearrowright (X, \mu)$.
Here, 
it is important to use property (S) of an equivalence relation introduced by Deprez (\cite{Dep19}).
Therefore, 
the aim is to show the following statement
(Theorem \ref{mainthm}).
\begin{mainthm*}
Let $\mathcal{R}$ be a non-singular countable Borel equivalence relation 
on a non-atomic standard probability space. 
If 
$\mathcal{R}$
is strongly ergodic
and
has property (S),
then
$L(\mathcal{R})$ is a full factor.
\end{mainthm*}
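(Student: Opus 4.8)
The plan is to characterize fullness through central sequences and then to isolate the role of each hypothesis. Write $M = L(\mathcal{R})$ and let $A = L^\infty(X) \subseteq M$ be the canonical Cartan subalgebra, with $E_A \colon M \to A$ the canonical faithful normal conditional expectation and $\varphi$ a faithful normal state obtained by composing the measure with $E_A$. First I would record that $M$ is a factor: since $A$ is maximal abelian in $M$ one has $\mathcal{Z}(M) \subseteq A' \cap M = A$, and $\mathcal{Z}(M) \cap A$ consists of the $\mathcal{R}$-invariant functions, which reduce to the scalars by ergodicity (a fortiori by strong ergodicity). By Connes' theory \cite{Con74}, in the form valid for general factors with separable predual, such a factor is full if and only if its asymptotic centralizer algebra is trivial, i.e. $M_\omega := M' \cap M^\omega = \mathbb{C}1$, where $M^\omega$ denotes the Ocneanu ultrapower with respect to $\varphi$ and $\omega$ is a fixed free ultrafilter. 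Thus the theorem reduces to proving $M_\omega = \mathbb{C}1$.

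Next I would separate the two hypotheses. Strong ergodicity of $\mathcal{R}$ is precisely the statement that every asymptotically invariant sequence of Borel sets is asymptotically trivial; phrased inside the ultrapower this reads $A^\omega \cap M' = \mathbb{C}1$, since a bounded sequence in $A$ commutes asymptotically with the groupoid normalizing $M$ exactly when it is asymptotically $\mathcal{R}$-invariant. Granting this, the whole problem collapses to the single containment
\[
M_\omega \subseteq A^\omega,
\]
i.e. to showing that every norm-bounded central sequence in $M$ lies asymptotically in the Cartan subalgebra. Indeed, this containment gives $M_\omega \subseteq A^\omega \cap M'$, and the right-hand side is $\mathbb{C}1$ by strong ergodicity, which finishes the proof.

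The crux, and the step where property (S) is indispensable, is therefore the containment $M_\omega \subseteq A^\omega$, which I would obtain by a spectral gap argument. Following the Houdayer--Isono strategy in the relative (groupoid) setting, property (S) of $\mathcal{R}$ in the sense of Deprez \cite{Dep19} should be converted into a condition-(AO)-type weak containment of $M$--$M$ bimodules, namely that the complementary bimodule $L^2(M) \ominus L^2(A)$ is weakly contained in a bimodule that is compact relative to $A$ (morally, $L^2(M) \ominus L^2(A) \prec L^2(M) \otimes_A L^2(M)$). Given $x = (x_n)^\omega \in M_\omega$, set $y_n = x_n - E_A(x_n)$, so that the $y_n$ are asymptotically central vectors lying in $L^2(M) \ominus L^2(A)$; if $\|y_n\|_\varphi$ did not tend to $0$ along $\omega$, transporting these almost-central vectors through the weak containment into the bimodule that is compact relative to $A$ would produce almost-central unit vectors whose mass escapes $A$, contradicting the relative compactness. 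Hence $\|x_n - E_A(x_n)\|_\varphi \to 0$ and $x \in A^\omega$, as required.

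I expect the main obstacle to be exactly this last step in the non-tracial regime. Making the spectral gap argument rigorous requires extracting the precise bimodular weak containment from Deprez's formulation of property (S), and then running the almost-central-vector analysis with respect to a non-tracial state, where one must control the modular data and work with the correct ultrapower (and possibly pass to the continuous core $M \rtimes_\sigma \mathbb{R}$) in order to legitimately speak of $L^2$-convergence and of the conditional expectation behaving well under the bimodule maps. The group measure space arguments of Houdayer--Isono will serve as the template, but every step there that invokes the regular representation of $\Gamma$ has to be re-read through the groupoid of $\mathcal{R}$ and its property (S).
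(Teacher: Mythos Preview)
Your opening reductions are correct and shared with the paper: $M=L(\mathcal{R})$ is a factor by ergodicity, fullness amounts to $M_\omega=\mathbb{C}1$, and strong ergodicity is precisely $M'\cap A^\omega=\mathbb{C}1$. The divergence is in the main step. The paper does \emph{not} attempt to prove $M_\omega\subseteq A^\omega$ by a bimodule/condition-(AO) spectral gap. Instead it argues by contradiction toward Zimmer amenability of $\mathcal{R}$, following Ozawa's method in \cite{Oz16} rather than the Houdayer--Isono bimodule machinery. Assuming $M$ is not full, \cite[Lemma~5.1]{HI15} together with strong ergodicity yields a central sequence of unitaries $(u_n)\subset M^{U}$ with $\lim_n\|E_A(xu_ny)\|_\tau=0$ for all $x,y\in M$. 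Writing each $u_n$ via its kernel $F_n$ on $\mathcal{R}$ and using the property-(S) functions $\eta(x,y)\in\ell^1([x]_\mathcal{R})$, the paper defines u.c.p.\ maps $\Phi_n:L^\infty(\mathcal{R},\mu_l)\to L^\infty(X)$ by $\Phi_n(f)(x)=\sum_{y,z}|F_n(x,z)|^2\eta(x,z)_{(y)}f(x,y)$ and proves, in a separate technical lemma, that $\|\Phi_n(f)^s-\Phi_n(f^s)\|_{L^1}\to 0$ for every $s\in[\mathcal{R}]$; centrality of $(u_n)$ controls one term and the vanishing $E_A(xu_ny)\to 0$ combined with local boundedness coming from property~(S) controls the other. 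Any weak$^*$ cluster point of $(\Phi_n)$ is then a left invariant mean, so $\mathcal{R}$ is amenable, contradicting strong ergodicity on a non-atomic space.

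Your proposed route has a genuine obstacle at the step ``property (S) for $\mathcal{R}$ gives a condition-(AO)-type weak containment of $L^2(M)\ominus L^2(A)$''. For groups this passage is exactly where \emph{exactness} enters: property (S) supplies $\ell^1$-valued almost-equivariant maps, and one needs exactness (or nuclearity of the relevant $C^*$-algebra maps) to upgrade these to the u.c.p.\ maps underlying condition (AO). No exactness hypothesis on $\mathcal{R}$ is assumed here, and the statement you need is not available in the literature. The Ozawa-style construction of an invariant mean bypasses this completely: it uses the $\eta$'s pointwise, never leaving $\ell^1$/$L^1$, so no $C^*$-tensor issues arise and only property (S) is required. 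Thus even if your bimodule strategy can be pushed through, it would either demand an extra hypothesis or a new ingredient replacing exactness, whereas the paper's argument is both more elementary and strictly weaker in its assumptions.
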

If we note that the strong ergodicity of the original action is inherited by a cross section equivalence relation
(Lemma \ref{strongly ergodic}), 
we obtain the following corollary (Corollary \ref{cormain}).
\begin{maincor*}
Let $G$ be a locally compact group which has property (S) and 
$G\curvearrowright (X, \mu)$
a strongly ergodic essentially free non-singular action
on a non-atomic standard probability space.
Then the crossed product von Neumann algebra 
$L^{\infty}(X, \mu) \rtimes G$
is a full factor. 
\end{maincor*}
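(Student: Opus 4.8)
The plan is to deduce the corollary from the main theorem by transferring the whole problem to a cross section equivalence relation, so that the substantive analysis lives at the level of an equivalence relation rather than the locally compact group $G$. First I would fix a lacunary cross section $Y \subseteq X$ for the non-singular action $G \curvearrowright (X,\mu)$, equip it with the associated transverse measure normalised to a probability measure, and let $\mathcal{R}_1$ be the induced cross section equivalence relation on $Y$, where $(y,y') \in \mathcal{R}_1$ exactly when $y$ and $y'$ lie in the same $G$-orbit. Because the action is essentially free, the groupoid $X \rtimes G$ is principal, so $\mathcal{R}_1$ really is a countable non-singular Borel equivalence relation and not merely a groupoid with isotropy; and since $(X,\mu)$ is non-atomic, the cross section space is readily seen to be non-atomic as well. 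This places us in the setting of the main theorem.

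Next I would verify the two hypotheses of the main theorem for $\mathcal{R}_1$. Strong ergodicity is handed to us by Lemma \ref{strongly ergodic}, which records exactly that strong ergodicity of the action passes to the cross section relation. For property (S) I would invoke the transfer principle linking the Brothier--Deprez--Vaes notion of property (S) for the locally compact group $G$ with Deprez's notion of property (S) for the cross section equivalence relation: since $G$ has property (S), so does $\mathcal{R}_1$. With strong ergodicity and property (S) established, the main theorem (Theorem \ref{mainthm}) applies and shows that $L(\mathcal{R}_1)$ is a full factor.

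It then remains to carry fullness back to the crossed product. By the standard theory of cross sections, the reduction of the measured groupoid $X \rtimes G$ to the transversal $Y$ is a Morita equivalence, and under essential freeness this reduction is precisely the equivalence relation $\mathcal{R}_1$; consequently $L^\infty(X,\mu) \rtimes G$ and $L(\mathcal{R}_1)$ are stably isomorphic, with $L^\infty(X,\mu) \rtimes G$ realised as an amplification $L(\mathcal{R}_1) \mathotimes B(H)$ (or a corner thereof) for a separable Hilbert space $H$. Since fullness of a factor with separable predual is preserved under amplification, and more generally is a stable-isomorphism invariant, the fullness of $L(\mathcal{R}_1)$ forces $L^\infty(X,\mu) \rtimes G$ to be a full factor.

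The step I expect to carry the real content, and hence to be the main obstacle, is the transfer of property (S) from $G$ to $\mathcal{R}_1$: one must reconcile the group-theoretic formulation of property (S) with its relational counterpart, keeping track of the transverse measure and of the modular (flow) structure that the non-singular cross section necessarily carries. By contrast, the construction of the cross section, the appeal to Lemma \ref{strongly ergodic}, and the final stable-isomorphism and amplification arguments are essentially formal once that transfer is in place.
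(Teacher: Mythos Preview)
Your proposal is correct and follows the same route as the paper: pass to a cross section equivalence relation, invoke the property~(S) transfer you flag as the key step (this is Lemma~\ref{property (S)}) together with Lemma~\ref{strongly ergodic}, apply Theorem~\ref{mainthm}, and return to the crossed product via the Kyed--Petersen--Vaes corner isomorphism $p(L^\infty(X)\rtimes G)p \cong L(\mathcal{R}_1)\mathotimes \mathbf{B}(L^2(U))$ and Lemma~\ref{lem_ultra}. The only detail the paper handles more carefully is that the transverse measure $\mu_1$ on the cross section is a priori only $\sigma$-finite, so before invoking Theorem~\ref{mainthm} (which requires a probability space) one restricts to a non-negligible finite-measure subset $Y\subset X_1$, works with $\mathcal{S}=\mathcal{R}_1\cap(Y\times Y)$ on $(Y,\mu_1(Y)^{-1}\mu_1)$, and then uses Lemma~\ref{lem_Deprez} and Lemma~\ref{lem_ultra} once more to climb back up; your phrase ``normalised to a probability measure'' elides this reduction.
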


Throughout this paper, locally compact groups are assumed to be Hausdorff and second countable,
and measure spaces are assumed to be non-atomic and standard.
\section{Preliminaries}
\subsection{von Neumann algebras}
In this subsection,
we will prepare notations for von Neumann algebras.
For any von Neumann algebra $M$,
we will denote by $M^{U}$ and $M^{P}$ the set of unitaries and projections in $M$, respectively.
In this paper, 
von Neumann algebras are assumed to be separable, i.e. with separable predual. 
For every faithful state $\varphi \in M_{\ast}$,
we write $\|x\|_{\varphi}:= \varphi(x^*x)^{\frac{1}{2}}$.

Let $M$ be a von Neumann algebra and $\omega \in \beta(\mathbb{N})\setminus \mathbb{N}$ be any
non-principal ultrafilter.
We denote by ${\ell}^{\infty}(M)$ the C$^*$-algebra of norm bounded sequences in $M$.
Define 
\begin{align*}
\mathcal{I}_{\omega}(M) &\coloneqq 
\Set{(x_n)_n \in {\ell}^{\infty}(M)| x_n \to 0 \ast\text{-strongly as } n \to \omega}, \\
\mathcal{M}^{\omega}(M) &\coloneqq
\Set{(x_n)_n \in {\ell}^{\infty}(M)| (x_n)_n\mathcal{I}_{\omega}(M) \subset \mathcal{I}_{\omega}(M) \text{ and }
\mathcal{I}_{\omega}(M)(x_n)_n \subset \mathcal{I}_{\omega}(M)}, \\
\mathcal{C}_{\omega}(M) &\coloneqq
\Set{(x_n)_n \in {\ell}^{\infty}(M)| \lim_{n \to \omega} \|\varphi(x_n \cdot) - \varphi(\cdot x_n)\|=0 
\text{ for all } \varphi \in M_{\ast}}.
\end{align*}
The subalgebras $\mathcal{M}^{\omega}(M)$ and $\mathcal{C}_{\omega}(M)$
are unital C$^*$-subalgebras of ${\ell}^{\infty}(M)$
and 
$\mathcal{I}_{\omega}(M)$ is a norm closed two-sided ideal of $\mathcal{M}^{\omega}(M)$ and $\mathcal{C}_{\omega}(M)$.
Then
$\mathcal{I}_{\omega}(M) \subset \mathcal{C}_{\omega}(M) \subset \mathcal{M}^{\omega}(M)$.
We define the \emph{ultraproduct von Neumann algebras} $M^{\omega}$ and $M_{\omega}$ by
the quotient C$^*$-algebras 
$M^{\omega}\coloneqq \mathcal{M}^{\omega}(M)/\mathcal{I}_{\omega}(M)$
and 
$M_{\omega}\coloneqq \mathcal{C}_{\omega}(M)/\mathcal{I}_{\omega}(M)$, respectively.
Indeed, 
$M^{\omega}$ and $M_{\omega}$ are known to be von Neumann algebras.
We have $M_{\omega} \subset M^{\omega}$.
Note that $M$ is regard as a von Neumann subalgebra of $M^{\omega}$ by 
the mapping 
$M \ni x \mapsto (x, x, ...) \in M^{\omega}$.
See \cite{AH14} and \cite{Oc85} for more details.
\subsection{Non-singular Borel equivalence relations}
Let $(X, \mu)$ be a standard measure space,
and $\mathcal{R} \subset X \times X$ an equivalence relation. 
If $\mathcal{R}$ is a Borel subset in $X\times X$ 
and each equivalence class of $\mathcal{R}$ is a countable set,
then $\mathcal{R}$ is called \emph{countable Borel}. 
The equivalence class of $x \in X$ is denoted by 
$[x]_{\mathcal{R}}\coloneqq \Set{y \in X| (x, y) \in \mathcal{R}}$.
We say that $\mathcal{R}$ is \emph{non-singular} for $\mu$
if for all Borel subsets $E \subset X$ with $\mu(E)=0$,
we have $\mu([E]_{\mathcal{R}})=0$,
where $[E]_{\mathcal{R}}:= \Set{x \in X | (x, y) \in \mathcal{R}\  \text{for some}\  y \in E }$.
In this paper, 
an equivalence relation always means countable Borel and non-singular.  
A Borel subset $\mathcal{W} \subset \mathcal{R}$ is called \emph{bounded} if
there exists $M >0$ such that 
$\#_x\mathcal{W} < M$ and $\#\mathcal{W}_y < M$
for $\text{a.e.~} x , y\in X$ 
where
$_x\mathcal{W} \coloneqq  \Set{y \in X| (x, y) \in \mathcal{W}}$
and
$\mathcal{W}_y \coloneqq  \Set{x \in X| (x, y) \in \mathcal{W}}$.
We also call that 
a Borel subset $\mathcal{W} \subset \mathcal{R}$ is called \emph{locally bounded}
if for every $\delta >0$ there exists a Borel subset $E \subset X$ with 
$\mu(X \setminus E) < \delta$ 
such that 
$\mathcal{W} \cap (E \times E)$ is bounded.
The \emph{full group} $[\mathcal{R}]$ of $\mathcal{R}$ 
is the group of all Borel automorphisms 
$s \colon X \to X$
such that
$\mathrm{graph}(s)\coloneqq \Set{(s(x), x)|x \in X}$ 
is contained in $\mathcal{R}$.
The \emph{full pseudogroup} $[[\mathcal{R}]]$ is the set of all partial Borel automorphisms 
$s \colon A \to B$ for Borel sets $A, B \subset X$ 
such that $\mathrm{graph}(s)$ is contained in $\mathcal{R}$. 
Then we write $\mathrm{Dom}(s) \coloneqq A$ and $\mathrm{Im}(s) \coloneqq B$.

We will recall the so-called Krieger construction (\cite{FM75}).
Let $\mathcal{R}$ be a countable Borel equivalence relation on a standard measure space $(X, \mu)$.
Then, we can get the left counting measure $\mu_{l}$ and the right counting measure $\mu_{r}$ on $\mathcal{R}$ as follows:
\begin{align*}
&\mu_l(A) \coloneqq \int_{X} \#\Set{y \in X| (x, y) \in A} \, \mathrm{d}\mu(x), \\
&\mu_r(A) \coloneqq \int_{X} \#\Set{x \in X| (x, y) \in A} \, \mathrm{d}\mu(y)
\end{align*}
for each Borel subset $A \subset \mathcal{R}$.
If $\mathcal{R}$ is non-singular for $\mu$,
then $\mu_l$ is equivalent to $\mu_r$.
A function $F \in L^{\infty}(\mathcal{R}, \mu_l)$ is called \emph{left bounded}
if its support is a bounded Borel subset of $\mathcal{R}$.
The set of left bounded functions on $\mathcal{R}$ is written as $\mathcal{M}_f(\mathcal{R})$. 
We equip $\mathcal{M}_f(\mathcal{R})$ with the operation defined by
\begin{align*}
(\lambda_1F_1 + \lambda_2F_2)(x, y) &\coloneqq \lambda_1F_1(x, y) + \lambda_2F_2(x, y), \\
(F_1 \ast F_2)(x, y) &\coloneqq \sum_{z \in [x]_{\mathcal{R}}} F_1(x, z)F_2(z, y), \\
F_1^{\ast}(x, y) &\coloneqq \overline{F_1(y, x)}
\end{align*}
for $F_1, F_2 \in \mathcal{M}_f(\mathcal{R})$ and $\lambda_1, \lambda_2 \in \mathbb{C}$.
Then we regard $\mathcal{M}_f(\mathcal{R})$ as a $\ast$-subalgebra of $\mathbf{B}(L^2(\mathcal{R}, \mu_l))$
by the map 
$L_F \colon L^2(\mathcal{R}, \mu_l) \to L^2(\mathcal{R}, \mu_l)$,
where 
$L_F$ is defined by
\[L_F(\xi)(x, y):= \sum_{z \in [x]_{\mathcal{R}}} F(x, z) \xi(z, y). \]
Thus, we can obtain the von Neumann algebra as follows:
\begin{align*}
L(\mathcal{R}) \coloneqq \Set{L_F| F\in \mathcal{M}_f(\mathcal{R})}'' \subset \mathbf{B}(L^2(\mathcal{R}, \mu_l)).
\end{align*}
The subalgebra $\mathcal{M}_f(\mathcal{R}) \subset L(\mathcal{R})$ contains a canonical copy of $L^{\infty}(X)$ 
by identifying an $f \in L^{\infty}(X)$ 
with the operator associated to the left bounded function 
$\mathcal{R} \ni (x, y) \mapsto f(x) 1_{\Delta}(x, y)$, 
where $1_{\Delta}$ denotes the characteristic function of the diagonal set 
$\Delta \coloneqq \Set{(x, y) \in X \times X | x=y} \subset \mathcal{R}$ .
If we define operators $L_f, u_s$ on $L^2(\mathcal{R})$ by 
\[
(L_f \xi) (x, y) := f(x) \xi(x, y)
\hspace{3mm}
\text{and}
\hspace{3mm}
(u_{s} \xi) (x, y) := \xi(s^{-1}(x), y) 1_{\mathrm{Im}(s)} (x)
\]
for all $f \in L^{\infty}(X)$,
$s \in [[\mathcal{R}]]$,
$\xi \in L^2(\mathcal{R})$
and
$\text{a.e.~} (x, y) \in \mathcal{R}$,
then $L(\mathcal{R})$ is generated by $L^{\infty}(X)$ and $\{u_s\}_{s \in [\mathcal{R}]}$.
Note that 
$u_{s} L_f u_s^{\ast} = L_{f \circ s^{-1}}$
for $f \in L^{\infty}(X)$ and $s \in [\mathcal{R}]$.
For every essentially free non-singular action $\Gamma \curvearrowright X$ of a countable group $\Gamma$,
we have the $\ast$-isomorphism
\[
L(\mathcal{R}(\Gamma \curvearrowright X))
\cong
L^{\infty}(X) \rtimes \Gamma, 
\]
where 
$\mathcal{R}(\Gamma \curvearrowright X)$
denotes the orbit equivalence relation of
$\Gamma \curvearrowright X$.
We remark that there exists a faithful normal conditional expectation $E$ from $L(\mathcal{R})$ to $L^{\infty}(X)$,
and $L^{\infty}(X)$ is a Cartan subalgebra of $L(\mathcal{R})$.
The von Neumann algebra $L(\mathcal{R})$ is a factor if and only if $\mathcal{R}$ is ergodic.
See \cite{FM75} and \cite[Section 2]{Ko} for more details.

We will recall the so-called Zimmer's amenability. 
We say that 
a equivalence relation $\mathcal{R}$ on $(X, \mu)$ is \emph{amenable}
if there exists a unital completely positive (u.c.p.) map 
$\Phi \colon L^{\infty}(\mathcal{R}, \mu_l) \to L^{\infty}(X, \mu)$
such that 
$\Phi(F^{s}) = \Phi(F)^s$ ($F \in L^{\infty}(\mathcal{R}, \mu_l)$) for every partial transformation $s \in [[\mathcal{R}]]$,
where 
$F^{s}, \; F \in L^{\infty}(\mathcal{R}, \mu_l)$
and 
$f^{s}, \; f \in L^{\infty}(X, \mu)$
are respectively defined as follows 
$F^{s}(x, y)
:= F(s^{-1}(x), y) 1_{\mathrm{Im}(s)}(x)$
and
$f^{s}(x)
:=f(s^{-1}(x)) 1_{\mathrm{Im}(s)}(x)$.
This function $\Phi$
is called a 
\textit{left invariant mean}
 on $\mathcal{R}$.
We recall that a von Neumann algebra $L(\mathcal{R})$ is amenable if and only if
an equivalence relation $\mathcal{R}$ is amenable.
See \cite{Zim77} for more details.
\subsection{Cross section equivalence relations}\label{subsection_cross_section}
For a non-singular action $G \curvearrowright (X, \mu)$ of a locally compact (non-countable) group,
the orbit equivalence relation 
$\mathcal{R}(G \curvearrowright (X, \mu))$
is uncountable in general.
Moreover, 
there is no faithful normal conditional expectation from $L^{\infty}(X) \rtimes G$ to $L^{\infty}(X)$.
A cross section defined by Forrest (\cite{For74}) is a useful tool to overcome these difficulties. 
See \cite{KPV15} for the content of this subsection.

Let $G \curvearrowright (X, \mu)$ be an essentially free non-singular action of a locally compact group $G$.
A \emph{cross section} $X_1$ for $G \curvearrowright (X, \mu)$ is a Borel subset of $(X, \mu)$ satisfying the following properties.
\begin{enumerate}
\item
There exists a neighborhood $U \subset G$ of the neutral element $e\in G$
such that the action map
$U \times X_1 \ni (g, x) \mapsto g \cdot x \in X$
is injective ;
\item
The Borel subset $G\cdot X_1 \subset X$ is conull,
i.e. $\mu(X \setminus G\cdot X_1)=0$.
\end{enumerate}
Forrest showed in \cite[Proposition 2.10]{For74} that 
there is always a cross section for every essentially free non-singular action 
$G \curvearrowright (X, \mu)$ of a locally compact group $G$.
By definition of a cross section,
a equivalence relation 
\begin{align*}
\mathcal{R}_1 := \Set{(x, y) \in X_1\times X_1|y=g\cdot x  \text{ for some } g \in G}
\end{align*}
is countable Borel on $X_1$
(see \cite[Proposition 4.3]{KPV15}).
It is called the \emph{cross section equivalence relation} for $G \curvearrowright (X, \mu)$. 

It is known that much of the information of 
the action $G \curvearrowright (X, \mu)$ 
is inherited by a cross section equivalence relation.
Let $G \curvearrowright (X, \mu)$ be an essentially free non-singular action.
Let $X_1 \subset X$ be a cross section for $G \curvearrowright (X, \mu)$
and denote by $\mathcal{R}_1$ its cross section equivalence relation.
Then there exists a unique $\sigma$-finite measure $\mu_1$ on $X_1$ satisfying
\begin{equation}\label{equation_cross_measure}
(\lambda_G \otimes \mu_1)(W) = \int_{X} \sum_{\substack{(g, y) \in W\\ x=gy}} D(g^{-1}, x) \, \mathrm{d}\mu(x)
\end{equation}
for every measurable subset $W \subset G \times X_1$,
where $\lambda_G$ is the left Haar measure on $G$ and $D(g, x)$ is the Radon--Nikodym derivatives
(see \cite[Proposition 4.3]{KPV15} and \cite[Proposition 2.5.38]{De}).
Moreover, $\mathcal{R}_1$ is non-singular for $\mu_1$.
Since the action map 
$G \times X_1 \ni (g, x) \mapsto g \cdot x \in X$
is countable-to-one and essentially surjective,
it admits a Borel right inverse 
$X \ni x \mapsto (\gamma(x), \pi(x)) \in G\times X_1$.
Using the Borel map $\pi \colon X \to X_1$,
we can see that the ergodicity of $G \curvearrowright (X, \mu)$ is equivalent to the ergodicity of $\mathcal{R}_1$.
Then,
it is shown in \cite[Lemma 4.5]{KPV15} between the crossed product von Neumann algebra $L^{\infty}(X) \rtimes G$ and the von Neumann algebra $L(\mathcal{R}_1)$,
that
\begin{align*}
p(L^{\infty}(X)\rtimes G)p \cong L(\mathcal{R}_1) \mathotimes \mathbf{B}(L^2(U))
\end{align*}
where $p:= 1_{U \cdot X_1} \in L^{\infty}(X)^P$.
\subsection{Property (S)}\label{subsection_(S)}
Property (S) for countable groups was introduced in \cite{Oz06}.
Recently, 
property (S) was introduced for locally compact groups in \cite{BDV18}.
We say that
a locally compact group $G$ has \emph{property (S)} 
if there exists a $\|\cdot\|_1$-continuous map 
$m \colon G \ni g \mapsto m_g \in \rm{Prob}(G)$
such that
$\lim_{h \to \infty}\|m_{ghk}- g_{\ast}m_h\|_1 = 0$
uniformly on compact subsets for $g, k \in G$,
where
$\rm{Prob}(G)$ denote the set of probability measures on $G$.
We also call that $G$ is \emph{bi-exact} 
if $G$ is exact in the sense of Kirchberg--Wassermann \cite{KW99}
and has property (S).
For example, 
free groups $\mathbb{F}_n$, 
$\mathrm{SL}(2, \mathbb{R})$,
$\mathbb{R}^2 \rtimes \mathrm{SL}(2, \mathbb{R})$
and 
the groups with metrically proper actions on trees
satisfy property (S),
and $\mathrm{SL}(3, \mathbb{R})$ does not satisfy property (S)
(see \cite[Proposition 7.1]{BDV18} and \cite{Dep19}).

In \cite{Dep19}, 
Deprez introduced property (S) for non-singular Borel equivalence relations $\mathcal{R}$ as well.
We say that an equivalence relation $\mathcal{R}$ has \emph{property (S)} 
if there exist functions $\eta(x, y) \in {\ell}^1([x]_{\mathcal{R}})$
for $(x, y) \in \mathcal{R}$
with $\sum_{z \in [x]_{\mathcal{R}}} \eta(x, y)_{(z)}=1$,
such that 
for all $s, t \in [\mathcal{R}]$ and $\varepsilon >0$
the subset 
\begin{align*}
\Set{(x, y) \in \mathcal{R}| \|\eta(s(x), t(y))- \eta(x, y)\|_{{\ell}^1([x]_{\mathcal{R}})} \geq \varepsilon
}
\end{align*}
of $\mathcal{R}$
is locally bounded.
Then the following result \cite[Lemma 5.4]{Dep19} holds.
\begin{lem}[Deprez]\label{lem_Deprez}
Let $\mathcal{R}$ be an ergodic non-singular countable Borel equivalence relation on a measure space $(X, \mu)$
and let $X_0$ be a non-negligible subset of $(X, \mu)$. 
Then, 
$\mathcal{R}$ has property (S) if and only if the restriction
$\mathcal{R} \cap (X_0 \times X_0)$ has property (S).
\end{lem}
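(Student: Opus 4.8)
The plan is to transport the structure functions between $\mathcal{R}$ and its restriction $\mathcal{R}_0:=\mathcal{R}\cap(X_0\times X_0)$ along an orbit projection, treating the two implications separately. Write $\mu_0:=\mu|_{X_0}$. Since $\mathcal{R}$ is ergodic, the $\mathcal{R}$-saturation $[X_0]_{\mathcal{R}}$ is conull, so I can fix a Borel map $\phi\colon X\to X_0$ with $(\phi(x),x)\in\mathcal{R}$ for a.e.\ $x$ and $\phi|_{X_0}=\mathrm{id}$; its fibres are countable and $[\phi(x)]_{\mathcal{R}_0}=[x]_{\mathcal{R}}\cap X_0$. Two soft facts will be used repeatedly. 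First, because $\mathcal{R}$ is non-singular and $\phi$ moves points inside their orbits, $\phi_\ast\mu\sim\mu_0$: for $N\subset X_0$ null, $\phi^{-1}(N)\subset[N]_{\mathcal{R}}$ is null, while conversely $\phi^{-1}(N)\supset N$. Second, \emph{local boundedness is a measure-class invariant that passes to restrictions}: if $\mathcal{W}$ is locally bounded for $\mu$ and $\nu\sim\mu$, then given $\delta$ the $\varepsilon$--$\delta$ form of $\nu\ll\mu$ yields $\delta'$ with $\nu(A)<\delta$ whenever $\mu(A)<\delta'$, so the set $E$ witnessing local boundedness at level $\delta'$ works for $\nu$ at level $\delta$; and $\mathcal{W}\cap(X_0\times X_0)\cap(E_0\times E_0)\subset\mathcal{W}\cap(E\times E)$ with $E_0=E\cap X_0$ shows that restrictions of (locally) bounded sets are (locally) bounded.

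For the forward implication, suppose $\mathcal{R}$ has property (S) with field $\eta$. For $x,y\in X_0$ set $\eta_0(x,y):=\phi_\ast\bigl(\eta(x,y)\bigr)\in\ell^1([x]_{\mathcal{R}}\cap X_0)=\ell^1([x]_{\mathcal{R}_0})$; pushing a probability vector forward preserves total mass $1$, and pushforward is an $\ell^1$-contraction. Given $s,t\in[\mathcal{R}_0]$, extend them to $\tilde s,\tilde t\in[\mathcal{R}]$ by the identity on $X\setminus X_0$. Then for $x,y\in X_0$ one has $\eta_0(s(x),t(y))-\eta_0(x,y)=\phi_\ast\eta(\tilde s(x),\tilde t(y))-\phi_\ast\eta(x,y)$, whence $\|\eta_0(s(x),t(y))-\eta_0(x,y)\|_1\le\|\eta(\tilde s(x),\tilde t(y))-\eta(x,y)\|_1$. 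Thus the bad set for $(\eta_0;s,t;\varepsilon)$ is contained in the bad set for $(\eta;\tilde s,\tilde t;\varepsilon)$ intersected with $X_0\times X_0$, which is locally bounded in $\mathcal{R}_0$ by the restriction fact; hence $\mathcal{R}_0$ has property (S).

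For the converse, suppose $\mathcal{R}_0$ has property (S) with field $\eta_0$, and define $\eta(x,y):=\eta_0(\phi(x),\phi(y))$, a probability vector on $[x]_{\mathcal{R}}\cap X_0\subset[x]_{\mathcal{R}}$. Fix $s,t\in[\mathcal{R}]$ and $\varepsilon>0$; since $(\phi(s(x)),\phi(x))$ and $(\phi(t(y)),\phi(y))$ lie in $\mathcal{R}_0$, Feldman--Moore gives countable families $(\sigma_n),(\tau_m)\subset[\mathcal{R}_0]$ and Borel partitions $X=\bigsqcup_n X^{(n)}=\bigsqcup_m Y^{(m)}$ with $\phi(s(x))=\sigma_n(\phi(x))$ on $X^{(n)}$ and $\phi(t(y))=\tau_m(\phi(y))$ on $Y^{(m)}$. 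On $X^{(n)}\times Y^{(m)}$ the bad set for $(\eta;s,t;\varepsilon)$ is the $\phi\times\phi$-preimage of $\mathcal{V}_{n,m}:=\{(u,v)\in\mathcal{R}_0:\|\eta_0(\sigma_n u,\tau_m v)-\eta_0(u,v)\|_1\ge\varepsilon\}$, and each $\mathcal{V}_{n,m}$ is locally bounded in $\mathcal{R}_0$ by property (S) of $\mathcal{R}_0$.

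The main obstacle is precisely here: this is a \emph{countable} union of locally bounded sets, and $\phi$ is only countable-to-one, so neither the union nor the $\phi\times\phi$-preimage is automatically locally bounded. Both issues are dissolved using finiteness of $\mu$. Given $\delta$, truncate the partitions to $n,m\le N$ with $\mu(\bigcup_{n>N}X^{(n)}),\mu(\bigcup_{m>N}Y^{(m)})<\delta$, reducing matters to the \emph{finite}, hence locally bounded, union $\mathcal{V}:=\bigcup_{n,m\le N}\mathcal{V}_{n,m}$; split $X=\bigsqcup_k X_k$ with $\phi|_{X_k}$ injective (Lusin--Novikov) and keep only $k\le K$ with $\mu(\bigcup_{k>K}X_k)<\delta$, on which $\phi$ is at most $K$-to-one; and use $\phi_\ast\mu\sim\mu_0$ together with measure-class invariance to pick $E_0\subset X_0$ with $\mathcal{V}\cap(E_0\times E_0)$ bounded by $M$ and $\mu(\phi^{-1}(X_0\setminus E_0))<\delta$. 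On the retained set $E$ (whose complement has measure a fixed multiple of $\delta$), each row of the bad set has $\phi(y)$ confined to the fewer-than-$M$ points of ${}_{\phi(x)}\bigl(\mathcal{V}\cap(E_0\times E_0)\bigr)$, with at most $K$ preimages per point, giving at most $MK$ entries, and symmetrically for columns. Hence the bad set is locally bounded and $\mathcal{R}$ has property (S). The only genuinely delicate point is this simultaneous control of the countable union and of the fibres of $\phi$; everything else is routine bookkeeping with Borel selections.
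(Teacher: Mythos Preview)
The paper does not contain a proof of this lemma. It is stated as Lemma~2.1 with the attribution ``[Deprez]'' and cited as \cite[Lemma~5.4]{Dep19}; the author simply quotes the result and uses it as a black box in the proofs of Lemma~\ref{property (S)} and Corollary~\ref{cormain}. There is therefore nothing in the paper to compare your argument against.

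That said, your proposal is a sound self-contained proof. The forward implication is routine: pushing $\eta$ forward along the orbit retraction $\phi$ and extending $s,t\in[\mathcal{R}_0]$ by the identity gives the required contraction of the $\ell^1$-discrepancy, and intersecting a locally bounded subset of $\mathcal{R}$ with $X_0\times X_0$ is indeed locally bounded for $\mu_0=\mu|_{X_0}$. The converse is where the real content lies, and you have correctly identified and handled the two obstructions: the countable Feldman--Moore decomposition of $x\mapsto(\phi(s(x)),\phi(x))$ into elements of $[\mathcal{R}_0]$, and the countable-to-one fibres of $\phi$. Both are tamed by truncation using finiteness of $\mu$, and the measure-class equivalence $\phi_\ast\mu\sim\mu_0$ (which you justify via non-singularity of $\mathcal{R}$) lets you transport the locally-bounded witness $E_0\subset X_0$ back to a set $E\subset X$ of small $\mu$-complement. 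The resulting bound $MK$ on the rows and columns of the bad set inside $E\times E$ is correct. One cosmetic point: your constant in $\mu(X\setminus E)$ is a fixed multiple of $\delta$, not $\delta$ itself, but since $\delta$ is arbitrary this is harmless.
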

\subsection{Full factors and strongly ergodic actions}
We say that a norm bounded sequence $(x_n)_n \subset M$
is \emph{central} if 
$\lim_{n\to \infty} \|\varphi(x_n\, \cdot )- \varphi( \cdot\, x_n) \|=0$
for all $\varphi \in M_{\ast}$,
and $(x_n)_n$ is \emph{trivial} if
there exits a bounded sequence $(\lambda_n)_n \subset \mathbb{C}$ 
such that 
$x_n - \lambda_n 1_{M} \to 0$
with respect to the $\ast$-strong topology. 
It is known that a factor $M$ is full if and only if
every central sequence in $M$ is trivial. 
Also, note that a factor $M$ is full if and only if $M_{\omega}=\mathbb{C}1$ 
for some (or any) non-principal ultrafilter $\omega \in \beta(\mathbb{N}) \setminus \mathbb{N}$.
By \cite[Lemma 2.11]{Con74},
for a separable von Neumann algebra $M$ 
and a separable type I factor $Q$,
we have $(M \mathotimes Q)_{\omega} \cong M_{\omega} \mathotimes \mathbb{C}1$. 
By \cite[Lemma 2.10]{MT16},
for a separable von Neumann algebra $M$ and any projection $p \in M^{P}$,
we have $(pMp)_{\omega} \cong pM_{\omega}p$. 
In summary, we obtain the following well-known result.
\begin{lem}\label{lem_ultra}
Let $M$ be a separable factor, $Q$ a separable type I factor and $p \in M^{P}$.
Then the fullness of $M$, $M \mathotimes Q$ and $pMp$ are equivalent.
\end{lem}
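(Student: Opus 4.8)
The plan is to reduce all three statements to the asymptotic-centralizer criterion recalled just above the lemma: a separable factor $N$ is full if and only if $N_{\omega} = \mathbb{C}1_N$ for some (equivalently every) non-principal ultrafilter $\omega \in \beta(\mathbb{N})\setminus\mathbb{N}$. Since $M \mathotimes Q$ is a factor (a tensor product of two factors, one of them type I) and $pMp$ is a factor (a corner of a factor, with $p \neq 0$), this criterion applies to all three algebras, so the task becomes to compare the three asymptotic centralizers $M_\omega$, $(M \mathotimes Q)_\omega$ and $(pMp)_\omega$. Both comparisons are furnished by the two computations cited in the excerpt, and the work is concentrated in one elementary point about corners.

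First I would treat the tensor factor. By \cite[Lemma 2.11]{Con74} one has $(M \mathotimes Q)_\omega \cong M_\omega \mathotimes \mathbb{C}1$, and the right-hand side is canonically isomorphic to $M_\omega$. Hence $(M \mathotimes Q)_\omega = \mathbb{C}1$ if and only if $M_\omega = \mathbb{C}1$, which by the criterion says precisely that $M \mathotimes Q$ is full if and only if $M$ is full. This step is purely formal once the cited identity is granted.

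Next I would treat the corner. By \cite[Lemma 2.10]{MT16} one has $(pMp)_\omega \cong pM_\omega p$, and under this identification the unit $p$ of $pMp$ corresponds to $p \in pM_\omega p$; thus $pMp$ is full if and only if $pM_\omega p = \mathbb{C}p$. It then remains to show that $pM_\omega p = \mathbb{C}p$ is equivalent to $M_\omega = \mathbb{C}1$. The implication $M_\omega = \mathbb{C}1 \Rightarrow pM_\omega p = \mathbb{C}p$ is immediate. For the converse I would exploit that every element of $M_\omega$ commutes with $M$: by definition $M_\omega$ arises from the central sequences $\mathcal{C}_\omega(M)$, which asymptotically commute with each element of $M$, so $M_\omega$ commutes with the copy of $M$ inside $M^\omega$ (in particular with $p$). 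Consequently the compression $x \mapsto pxp = xp$ is a normal $*$-homomorphism of $M_\omega$ onto $pM_\omega p$ sending $1$ to $p$.

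The one nontrivial point, which I expect to be the main obstacle, is the injectivity of this compression, i.e.\ that $xp = 0$ forces $x = 0$ for $x \in M_\omega$; this is exactly where factoriality of $M$ enters. Since $M$ is a factor and $p \neq 0$, a standard maximality-and-comparison argument on projections equivalent to subprojections of $p$ produces partial isometries $v_i \in M$ with $v_i v_i^* \le p$ (equivalently $v_i = p v_i$, hence $v_i^* = v_i^* p$) and $\sum_i v_i^* v_i = 1$. Using $x v_i^* = v_i^* x$ together with $xp = 0$, one computes $x v_i^* v_i = x v_i^* p v_i = v_i^*(xp)v_i = 0$ for every $i$, whence $x = x\sum_i v_i^* v_i = 0$. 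Thus the compression is a $*$-isomorphism $M_\omega \cong pM_\omega p$ carrying $1$ to $p$, so $M_\omega = \mathbb{C}1$ if and only if $pM_\omega p = \mathbb{C}p$. Combining this with the tensor step yields the asserted equivalence of fullness for $M$, $M \mathotimes Q$ and $pMp$.
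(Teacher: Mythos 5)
Your proposal is correct and takes essentially the same route as the paper, which deduces the lemma from the criterion that a separable factor $N$ is full iff $N_{\omega}=\mathbb{C}1$, combined with the cited isomorphisms $(M \mathotimes Q)_{\omega} \cong M_{\omega} \mathotimes \mathbb{C}1$ \cite[Lemma 2.11]{Con74} and $(pMp)_{\omega} \cong pM_{\omega}p$ \cite[Lemma 2.10]{MT16}. Your extra verification that $pM_{\omega}p=\mathbb{C}p$ forces $M_{\omega}=\mathbb{C}1$ — using $M_{\omega}\subset M'\cap M^{\omega}$ and partial isometries $v_i$ with $v_iv_i^{*}\leq p$, $\sum_i v_i^{*}v_i=1$, available since factoriality gives $p$ full central support — is a correct filling-in of the one detail the paper leaves implicit in calling the result ``well-known.''
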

Strong ergodicity is the concept corresponding to fullness for non-singular actions. 
Let $G \curvearrowright (X, \mu)$ be a non-singular action 
($G$ may  be a locally compact group)
and the induced action $\sigma \colon G \curvearrowright L^{\infty}(X, \mu)$
be given by 
$\sigma_g(f)(x) \coloneqq f(g^{-1} x)$
($g \in G, f \in L^{\infty}(X), x \in X$).
We say that a sequence $(f_n)_n \subset L^{\infty}(X)$ is \emph{almost $G$-invariant} if
$\sigma_g(f_n) - f_n \to 0$
in the measure topology,
uniformly on compact subsets for $g\in G$,
and $(f_n)_n \subset L^{\infty}(X)$ is \emph{trivial} if
there exists a bounded sequence $(\lambda_n)_n$ in $\mathbb{C}$ 
such that $f_n - \lambda_n 1 \to 0$  in the measure topology.
We recall that a non-singular action is \emph{strongly ergodic} 
if every almost invariant sequence is trivial. 
By definition, 
strongly ergodicity implies ergodicity. 
By Rokhlin lemma, 
every ergodic essentially free non-singular action
$\mathbb{Z} \curvearrowright (X, \mu)$
on a non-atomic measure space is not strongly ergodic. 
Recall that,
for a non-singular Borel equivalence relation $\mathcal{R}$ on a measure space $(X, \mu)$, 
a sequence $(f_n)_n \subset L^{\infty}(X)$ is \emph{almost $\mathcal{R}$-invariant} if 
$f^l_n - f^r_n \to 0$ in the measure topology,
where 
$f^l_n(x, y)= f_n(x), f^r_n(x, y)= f_n(y) \text{ for } (x, y) \in \mathcal{R}$,
and 
$\mathcal{R}$ is \emph{strongly ergodic} if every almost $\mathcal{R}$-invariant sequence is trivial.
We remark that
all amenable non-singular equivalence relations on a non-atomic measure space are never strongly ergodic.
Note that 
the ergodic non-singular equivalence relation $\mathcal{R}$ on $(X, \mu)$ is strongly ergodic 
if and only if 
$L(\mathcal{R})'\cap L^{\infty}(X, \mu)^{\omega} = \mathbb{C}$
for some (or any) non-principal ultrafilter $\omega \in \beta(\mathbb{N})\setminus \mathbb{N}$.
If $L(\mathcal{R})$ is full, then $\mathcal{R}$ is strongly ergodic.  
It has been pointed out by Connes--Jones \cite{CJ81} that the converse statement is not true.
Also, 
Ueda showed that 
non-singular Borel equivalence relations of type I\hspace{-.1em}I\hspace{-.1em}I$_0$ are not strongly ergodic \cite[Corollary 11]{Ue00}. 
\section{Proof of the main theorem}
For a probability measure preserving action $G \curvearrowright (X, \mu)$,
Deprez showed that
property (S) of  the locally compact group $G$ is equivalent to
property (S) of a cross section equivalence relation $\mathcal{R}_1$ for $G \curvearrowright (X, \mu)$.
Based on this proof, 
we can see that one direction also holds for a non-singular action.
\begin{lem}[cf.{\cite[Proposition 5.5]{Dep19}}]\label{property (S)}
Let $G$ be a locally compact group and 
$G\curvearrowright (X, \mu)$
an ergodic essentially free
non-singular action.
Let $X_1 \subset X$ be a cross section and 
$\mathcal{R}_1$ the associated cross section equivalence relation. 
If
$G$ has property (S), 
then 
so does $\mathcal{R}_1$. 
\end{lem}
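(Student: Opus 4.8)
The plan is to transport the property (S) data $g\mapsto m_g$ of $G$ to $\mathcal{R}_1$ by pushing the probability measures $m_g$ onto the countable equivalence classes, using the Borel right inverse $x\mapsto(\gamma(x),\pi(x))$ of the action map. By essential freeness, for every $(x,y)\in\mathcal{R}_1$ there is a unique $g(x,y)\in G$ with $y=g(x,y)\cdot x$, and one records the cocycle identity $g(s(x),t(y))=b\,g(x,y)\,a^{-1}$ whenever $s,t\in[\mathcal{R}_1]$, $s(x)=a\cdot x$ and $t(y)=b\cdot y$. For fixed $x$ I would use the Borel map $\Phi_x\colon G\to[x]_{\mathcal{R}_1}$, $\Phi_x(g):=\pi(g^{-1}\cdot x)$, which lands in the class of $x$ because $g^{-1}\cdot x$ lies in the orbit of $x$. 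The crucial bookkeeping point is that a base-point change transforms $\Phi$ by a \emph{left} translation, $\Phi_{a\cdot x}=\Phi_x\circ L_{a^{-1}}$ with $L_c(g)=cg$; this is exactly the form of translation that the left-pushforward appearing in property (S) can absorb, and it is what forces the use of $g^{-1}$ rather than $g$ below.

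I would then define $\eta(x,y):=(\Phi_x)_*\,m_{g(x,y)^{-1}}\in\ell^1([x]_{\mathcal{R}_1})$, the image of $m_{g(x,y)^{-1}}$ under $\Phi_x$. As a pushforward of a probability measure onto a countable set, $\eta(x,y)$ is nonnegative with $\sum_{z\in[x]_{\mathcal{R}_1}}\eta(x,y)_{(z)}=1$, so the normalization demanded in the definition of property (S) holds automatically. The main computation is the $\ell^1$-comparison: using $g(s(x),t(y))^{-1}=a\,g(x,y)^{-1}\,b^{-1}$ together with $\Phi_{s(x)}=\Phi_x\circ L_{a^{-1}}$, I can rewrite $\eta(s(x),t(y))$ as the pushforward under the \emph{single} map $\Phi_x$ of the measure $(L_{a^{-1}})_*\,m_{a\,g(x,y)^{-1}\,b^{-1}}$. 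Since a pushforward is an $\ell^1$-contraction and left translation is an $L^1$-isometry for the left Haar measure, this gives $\|\eta(s(x),t(y))-\eta(x,y)\|_{\ell^1([x]_{\mathcal{R}_1})}\le\|m_{a\,g(x,y)^{-1}\,b^{-1}}-a_*m_{g(x,y)^{-1}}\|_1$, and property (S) of $G$, applied with the triple $a$, $g(x,y)^{-1}$, $b^{-1}$, makes the right-hand side small once $g(x,y)^{-1}$ leaves a compact set, uniformly for $a$ and $b^{-1}$ in compacta.

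It remains to convert this ``uniformly on compacta'' statement into local boundedness of the bad set $\{(x,y)\in\mathcal{R}_1:\|\eta(s(x),t(y))-\eta(x,y)\|_{\ell^1}\ge\varepsilon\}$ for fixed $s,t$ and $\varepsilon>0$, and I would do this in two steps. First, since $G$ is second countable and locally compact, hence $\sigma$-compact, and $x\mapsto a(s,x)$, $y\mapsto b(t,y)$ are Borel, the preimages of an exhausting sequence of compacta give Borel sets $E_n\nearrow X_1$ on which both take values in a fixed compact $K$ (passing to an equivalent probability measure if convenient, which is harmless as local boundedness depends only on the measure class). By the previous paragraph there is a compact $L\subset G$ such that on $E_n\times E_n$ the norm is below $\varepsilon$ whenever $g(x,y)^{-1}\notin L$, so the bad set meets $E_n\times E_n$ inside $\{(x,y):g(x,y)\in L^{-1}\}$. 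Second, I would invoke the defining injectivity of the cross section: the action map is injective on $U\times X_1$, so the compact set $L^{-1}$ is covered by finitely many $U$-translates, whence $\{(x,y)\in\mathcal{R}_1:g(x,y)\in L^{-1}\}$ has uniformly finite fibres, i.e.\ is bounded. Combining the two steps shows the bad set is locally bounded, which is property (S) for $\mathcal{R}_1$.

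The step I expect to be the main obstacle is aligning the left/right translation conventions: property (S) of $G$ only controls \emph{left} pushforwards of the $m_g$, whereas the naive pushforward of $m_{g(x,y)}$ along $g\mapsto\pi(g\cdot x)$ transforms under a base-point change by a \emph{right} translation, which property (S) cannot absorb. Resolving this is precisely what dictates the twisted choice $\Phi_x(g)=\pi(g^{-1}\cdot x)$ paired with the inverse displacement $m_{g(x,y)^{-1}}$, and checking that this choice reproduces exactly the combination $\|m_{aht^{-1}}-a_*m_h\|_1$ governed by property (S) is the heart of the argument. By contrast, the genuinely non-singular subtleties — the Radon--Nikodym factors in \eqref{equation_cross_measure} and the $\sigma$-finiteness of $\mu_1$ — are comparatively mild, since both boundedness and local boundedness are measure-class invariants, so the combinatorial core remains identical to the measure-preserving case of \cite[Proposition 5.5]{Dep19}.
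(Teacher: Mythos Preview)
Your construction of $\eta(x,y)=(\Phi_x)_*\,m_{g(x,y)^{-1}}$ and the ensuing estimate
\[
\|\eta(s(x),t(y))-\eta(x,y)\|_{\ell^1}\le\|m_{a\,g(x,y)^{-1}\,b^{-1}}-a_*m_{g(x,y)^{-1}}\|_1
\]
are exactly what the paper does (its $\eta_1(x,y)=(\pi_x)_*\eta_{\omega(x,y)}$ is your $\eta$ after noting $\omega(x,y)=g(x,y)^{-1}$), and your argument that $\{(x,y):g(x,y)\in L^{-1}\}$ is bounded via finitely many $U$-translates is also the paper's reasoning. So the analytic core is correct and essentially identical.

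There is, however, a genuine gap in the last step. Your assertion that ``local boundedness depends only on the measure class'' is false when one passes from an infinite $\sigma$-finite measure to an equivalent probability measure. For a quick counterexample, take $X=\mathbb{N}\times[0,1]$ with the relation $\mathcal{R}=\{((n,t),(m,t))\}$, let $\mu$ be counting times Lebesgue and $\nu$ the equivalent probability with weights $2^{-n}$ on $\mathbb{N}$; then $\mathcal{W}=\mathcal{R}$ is locally bounded for $\nu$ (use $E=\{1,\dots,N\}\times[0,1]$) but not for $\mu$, since any $E$ with $\mathcal{W}\cap(E\times E)$ bounded must miss infinitely many $\{n\}\times[0,1]$, forcing $\mu(X\setminus E)=\infty$. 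Since $\mu_1$ on $X_1$ is in general only $\sigma$-finite, your sets $E_n\nearrow X_1$ need not satisfy $\mu_1(X_1\setminus E_n)<\delta$, so you have not verified local boundedness for the relevant measure.

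The paper circumvents this by first choosing a non-negligible $X_0\subset X_1$ with $\mu_1(X_0)<\infty$ and proving property~(S) for $\mathcal{R}_0=\mathcal{R}_1\cap(X_0\times X_0)$ on the \emph{finite} measure space $(X_0,\mu_1|_{X_0})$; there your increasing-exhaustion argument is valid. One then invokes Lemma~\ref{lem_Deprez} to transfer property~(S) back to $\mathcal{R}_1$. This introduces one extra wrinkle you do not mention: your $\eta(x,y)$ lives in $\ell^1([x]_{\mathcal{R}_1})$, whereas property~(S) for $\mathcal{R}_0$ requires measures on the smaller class $[x]_{\mathcal{R}_0}$. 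The paper fixes this by choosing a Borel partition $X_1=\bigcup_i X_i$ with $\phi_i\in[\mathcal{R}_1]$, $\phi_i(X_i)\subset X_0$, and pushing $\eta_1$ forward along the $\phi_i$ to obtain $\eta_0(x,y)\in\ell^1([x]_{\mathcal{R}_0})$; the main inequality survives this push-forward unchanged.
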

\begin{proof}
Fix Borel maps 
$\gamma \colon X \to G$
and
$\pi \colon X \to X_1$
with 
$x = \gamma(x) \pi(x)$
for
$\text{a.e.~} x \in X$
as in Subsection \ref{subsection_cross_section}.
Take a map 
$\eta \colon G \ni g \mapsto \eta_g \in \mathrm{Prob}(G)$
as in the definition of property (S) for $G$.
For each $x \in X$,
we set the map 
$\pi_x \colon G \ni g \mapsto \pi(g^{-1}x) \in X_1$. 
Fix the cocycle $\omega \colon \mathcal{R}_1 \to G$ satisfying $\omega(x, y)y= x$ for all $(x, y) \in \mathcal{R}_1$.
We define functions $\eta_1(x, y) \in \ell^{1}([x]_{\mathcal{R}_1})$ by
\begin{equation*}
\eta_1 (x, y) := (\pi_x)_{\ast}\eta_{\omega(x, y)}
\end{equation*}
for $(x, y) \in \mathcal{R}_1$.
Note that every function $\eta_1(x, y)$ is a probability measure on the $\mathcal{R}_1$-orbit of $x$.
Indeed, 
\[\eta_1(x, y)([x]_{\mathcal{R}_1})
= \eta_{\omega(x, y)}(\pi_x^{-1}([x]_{\mathcal{R}_1}))
= \eta_{\omega(x, y)} (G)
=1. \]

We now,  take a Borel subset $X_0 \subset X_1$ such that $0 < \mu_1(X_0) < +\infty$.
Since $G \curvearrowright (X, \mu)$ is ergodic, 
$\mathcal{R}_1$ is ergodic. 
By ergodicity of $\mathcal{R}_1$, 
we have a partition $X_1 = \bigcup_{i\in I} X_i$ up to measure zero and 
Borel isometries $\phi_i \in [\mathcal{R}_1]$ with $\phi_i(X_i) \subset X_0$.
Set 
$\mathcal{R}_0 := \mathcal{R}_1 \cap (X_0 \times X_0)$. 
By Lemma \ref{lem_Deprez}, 
in order to prove that $\mathcal{R}_1$ has property (S),
it suffices to show that $\mathcal{R}_0$ has property (S). 
We define,
for $(x, y) \in \mathcal{R}_0$,
the probability measure $\eta_0(x, y)$
on the $\mathcal{R}_0$-orbit of $x$ 
by setting 
\begin{equation*}\dsp
\eta_0(x, y)(z) := \sum_{i\in I_z}\eta_1(x, y)(\phi_i^{-1}(z)) 
\end{equation*}
for $z \in [x]_{\mathcal{R}_0}$,
where $I_z:= \{i \in I\mid z\in \phi_i(X_i)\}$.

Fix $\varepsilon>0$ and $s, t \in [\mathcal{R}_0]$. 
Since $G$ is second countable, 
we can take a family of  compact subsets $\{K_n\}_{n \in \mathbb{N}}$ in $G$ with  
$G =\bigcup_nK_n$. 
We set,
for $n \in \mathbb{N}$, 
$E_n := \{x \in X_0 \mid \omega(s(x), x) \in K_n \}$, 
and we have $X_0 = \bigcup_n E_n$.
Since $\lim_n \mu_1(X_0 \setminus E_n) =0$
(note that 
$0 <\mu_1(X_0) < +\infty$), 
by taking $n_0$ large enough,
we obtain 
$\mu_1(X_0 \setminus E_{n_0}) < \varepsilon$.
Using this argument, 
we can find a compact set $K \subset G$ and a measurable set $E \subset X_0$ 
with
$\mu_1(X_0 \setminus E) < \varepsilon$
such that 
$\omega(s(x), x) \in K$ and $\omega(y, t(y)) \in K$
for all $x, y \in E$.
Take a compact set $L \subset G$ such that 
$\|\eta_{gkh} - g \cdot \eta_k\|_1 < \varepsilon$
for all $g, h\in K$ and all $k \in G \setminus L$.
\begin{claim*}
For $(x, y)\in \mathcal{R}_0 \cap (E \times E)$, 
$
\|\eta_0(s(x), t(y))- \eta_0(x, y)\|_{{\ell}^{1}([x]_{\mathcal{R}_0})} < \varepsilon
$
whenever $\omega(x, y) \in G \setminus L$.
\end{claim*}
\begin{proof}[Proof of claim]
Fix $(x, y)\in \mathcal{R}_0$
with
$\omega(x, y) \in G \setminus L$. 
Now, note that
\[\pi_x(g)=
\pi(g^{-1}x)
= \pi(g^{-1}\omega(s(x), x)^{-1} s(x))
= \pi((\omega(s(x), x)g)^{-1}s(x))
= \pi_{s(x)}(\omega(s(x), x)g)
\]
and hence 
$
(\pi_x)_{\ast}\eta_{\omega(x, y)}
= (\pi_{s(x)})_{\ast}(\omega(s(x), x) \cdot \eta_{\omega(x, y)}).
$
Then we have
\begin{align*}\dsp
&\hspace{5mm}\|\eta_0(s(x), t(y))- \eta_0(x, y)\|_{{\ell}^{1}([x]_{\mathcal{R}_0})} \\
&= \sum_{
z \in [x]_{\mathcal{R}_0}} 
\left|
\sum_{i \in I_z}\eta_1(s(x), t(y))(\phi_i^{-1}(z))- \eta_1(x, y)(\phi_i^{-1}(z))
\right|\\
&= \sum_{
z \in [x]_{\mathcal{R}_0}} 
\left|
\sum_{i \in I_z}(\pi_{s(x)})_{\ast}\eta_{\omega(s(x), t(y))}(\phi_i^{-1}(z))
- (\pi_{s(x)})_{\ast}(\omega(s(x), x) \cdot \eta_{\omega(x, y)})(\phi_i^{-1}(z))
\right|\\
&\leq \|(\pi_{s(x)})_{\ast}\eta_{\omega(s(x), t(y))}-(\pi_{s(x)})_{\ast}(\omega(s(x), x) \cdot \eta_{\omega(x, y)}) \|_1 \\
&\leq
\|\eta_{\omega(s(x), t(y))}-
\omega(s(x), x) \cdot \eta_{\omega(x, y)}\|_1 \\
&=
\|\eta_{\omega(s(x),x) \omega(x, y) \omega(y, t(y))} -\omega(s(x), x) \cdot \eta_{\omega(x, y)} \|_1
< \varepsilon,
\end{align*}
in the last inequality we used the assumption that
$\omega(s(x), x) \in K$,
$\omega(y, t(y)) \in K$
and 
$\omega(x, y) \in G \setminus L$.
Thus,  the claim is proved. 
\end{proof}
\noindent
By the above claim,
we obtain the inclusion 
\begin{align*}
&\{(x, y)\in \mathcal{R}_0\cap(E \times E) \mid \|\eta_0(s(x), t(y))- \eta_0(x, y)\|_{{\ell}^{1}([x]_{\mathcal{R}_0})} \geq \varepsilon\}\\
\subset&
\{(x, y)\in \mathcal{R}_0\cap(E \times E) \mid \omega(x, y)\in L\}.
\end{align*}
Since the latter set is bounded by compactness of $L$,
the set $\{(x, y)\in \mathcal{R}_0 \mid \|\eta_0(s(x), t(y))- \eta_0(x, y)\|_{{\ell}^{1}([x]_{\mathcal{R}_0})} \geq \varepsilon\}$
is locally bounded. 
Thus, $\mathcal{R}_0$ has property (S), and so does $\mathcal{R}_1$ by Lemma \ref{lem_Deprez}.
\end{proof}
\begin{lem}\label{strongly ergodic}
Let $G$ be a locally compact group and 
$G\curvearrowright (X, \mu)$
an essentially free 
non-singular action.
Let $X_1 \subset X$ be a cross section and 
$\mathcal{R}_1$ the associated cross section equivalence relation. 
If $G\curvearrowright (X, \mu)$ is strongly ergodic, 
then $\mathcal{R}_1$ is strongly ergodic. 
\end{lem}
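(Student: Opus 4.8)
The plan is to take an arbitrary almost $\mathcal{R}_1$-invariant sequence on $X_1$, lift it to an almost $G$-invariant sequence on $X$, invoke strong ergodicity of the action to trivialize the lift, and then push the triviality back down to $X_1$. Throughout I fix the Borel section $(\gamma,\pi)\colon X\to G\times X_1$ with $x=\gamma(x)\pi(x)$ from Subsection~\ref{subsection_cross_section}, and I recall that the defining measure $\mu_1$ is related to $\mu$ through the formula \eqref{equation_cross_measure}.

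Given an almost $\mathcal{R}_1$-invariant sequence $(f_n)_n\subset L^\infty(X_1)$, set $\tilde f_n:=f_n\circ\pi\in L^\infty(X)$, which is norm bounded since $\|\tilde f_n\|_\infty=\|f_n\|_\infty$. The mechanism of the lift is the identity
\[
\bigl(\sigma_g(\tilde f_n)-\tilde f_n\bigr)(x)=f_n\bigl(\pi(g^{-1}x)\bigr)-f_n\bigl(\pi(x)\bigr)=\bigl(f_n^r-f_n^l\bigr)\bigl(\Psi_g(x)\bigr),
\]
where $\Psi_g\colon X\to\mathcal{R}_1$, $\Psi_g(x):=(\pi(x),\pi(g^{-1}x))$, is well defined because $\pi(x)$ and $\pi(g^{-1}x)$ both lie in $X_1$ and in the single $G$-orbit of $x$, hence are $\mathcal{R}_1$-equivalent. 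Thus proving that $(\tilde f_n)_n$ is almost $G$-invariant reduces to transporting the convergence $f_n^l-f_n^r\to 0$, in measure on $(\mathcal{R}_1,\mu_{1,l})$, through the maps $\Psi_g$, uniformly for $g$ in compact subsets $K\subset G$.

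This transport is the technical heart, and it is where I expect the genuine difficulty. I would disintegrate $\mu$ along $\pi$, writing $\pi_*\mu=h\,\mu_1$ with $0<h\in L^1(X_1,\mu_1)$ (positivity a.e.\ being a consequence of \eqref{equation_cross_measure}) and conditional measures $(\mu_{x_0})$ on the fibres $\pi^{-1}(x_0)$. Setting $p_g(x_0,y):=\mu_{x_0}(\{x\in\pi^{-1}(x_0):\pi(g^{-1}x)=y\})$, one obtains for Borel $A\subset\mathcal{R}_1$
\[
(\Psi_g)_*\mu(A)=\int_{X_1}h(x_0)\sum_{y:(x_0,y)\in A}p_g(x_0,y)\,\mathrm{d}\mu_1(x_0),
\]
and since $\sum_y p_g(x_0,y)=1$ the density of $(\Psi_g)_*\mu$ against $\mu_{1,l}$ is at most $h$. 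The crucial quantitative input I must supply is that the cross section has \emph{locally bounded return}: the set $F_K:=\bigcup_{g\in K}\{(x_0,y):p_g(x_0,y)>0\}$ is a \emph{bounded} Borel subset of $\mathcal{R}_1$. Intuitively, although the global cocycle $\gamma$ need not be bounded, the fibre $\pi^{-1}(x_0)$ is $U$-separated from neighbouring fibres, so only a $K$-boundary layer of $\pi^{-1}(x_0)$ can contribute partners $y$, and their number is controlled by the injectivity of $U\times X_1\to X$; this is precisely where the cross section machinery of \cite{For74,KPV15} is needed. Granting boundedness of $F_K$, we have $\mu_{1,l}(F_K\cap(E\times X_1))\le N(K)\,\mu_1(E)<\infty$ for every finite-measure $E\subset X_1$. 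Writing $A_n:=\{|f_n^l-f_n^r|\ge\varepsilon\}$ and splitting $\int h\sum_{y}p_g\,\mathrm{d}\mu_1$ over $\{h\le H\}$ and a large-measure $E$, the bound by $h$ together with convergence of $\mu_{1,l}\bigl(A_n\cap F_K\cap(E\times X_1)\bigr)$ on this \emph{fixed} finite-measure set yields $(\Psi_g)_*\mu(A_n)\to0$ uniformly in $g\in K$. Hence $(\tilde f_n)_n$ is almost $G$-invariant.

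Finally, strong ergodicity of $G\curvearrowright(X,\mu)$ furnishes a bounded sequence $(\lambda_n)_n\subset\mathbb{C}$ with $\tilde f_n-\lambda_n 1\to0$ in measure on $(X,\mu)$. As $\tilde f_n-\lambda_n 1=(f_n-\lambda_n)\circ\pi$ and $\pi_*\mu=h\,\mu_1$ with $h>0$ a.e., the convergence pushes down: on any finite-measure set where $h\ge\delta$ one has $\delta\,\mu_1(\{|f_n-\lambda_n|\ge\varepsilon\})\le\mu(\{|\tilde f_n-\lambda_n|\ge\varepsilon\})\to0$, and such sets exhaust $X_1$. Therefore $f_n-\lambda_n 1\to0$ in measure on $(X_1,\mu_1)$, so $(f_n)_n$ is trivial and $\mathcal{R}_1$ is strongly ergodic. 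The only step I anticipate as genuinely delicate is the boundedness of the return set $F_K$ and the attendant uniformity over compact $K$; the lift identity and the push-down are then routine bookkeeping with the section and the conditional measures.
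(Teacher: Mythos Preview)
Your overall scheme---lift an almost $\mathcal{R}_1$-invariant sequence to $X$ via $\pi$, apply strong ergodicity of the $G$-action, push triviality back down---is exactly what the paper does. The only structural difference is that the paper first restricts to a subset $X_0\subset X_1$ of finite $\mu_1$-measure (using ergodicity and full-group elements $\phi_i$ to compress into $X_0$), proves strong ergodicity of $\mathcal{R}_0=\mathcal{R}_1\cap(X_0\times X_0)$, and only afterwards bootstraps back to $\mathcal{R}_1$ via the identification $(pMp)^\omega\cong pM^\omega p$. Your push-down through $\pi_*\mu=h\,\mu_1$ with $h>0$ a.e.\ is a clean variant of the paper's computation with $U\cdot Z_n$ and \eqref{equation_cross_measure}.

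The one substantive gap is your boundedness claim for $F_K$. The $U$-injectivity in the definition of a cross section separates the \emph{points} of $X_1$ inside an orbit; it says nothing about the \emph{fibres} $\pi^{-1}(x_0)$ of an arbitrary Borel right inverse $\pi$, which can be highly irregular. Concretely, on a single orbit identified with $G$ one can choose $\pi$ so that an arbitrarily small neighbourhood of one fibre meets infinitely many others (e.g.\ for $G=\mathbb{R}$, redefine $\pi$ on dyadic subintervals of $(0,\tfrac12)$ to take every value $k\in\mathbb{N}$), and then $\{y:(x_0,y)\in F_K\}$ is infinite for a positive-measure set of $x_0$. What \emph{is} correct in your computation is that the density of $(\Psi_g)_*\mu$ against $\mu_{1,l}$ equals $h(x_0)\,p_g(x_0,y)\le h(x_0)$; since $(\Psi_g)_*\mu$ is a probability measure absolutely continuous with respect to $\mu_{1,l}$, this already transports convergence in measure for each \emph{fixed} $g$. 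The remaining issue is the uniformity over $g\in K$, which boundedness of $F_K$ would have supplied but which now needs a different argument (for instance an equi-integrability statement for the family $\{h\,p_g\}_{g\in K}$, or a more careful choice of the section $\pi$). In fairness, the paper handles this very step far more tersely than you do: it records only the pointwise fact $(\phi_i(\pi(x)),\phi_j(\pi(gx)))\in\mathcal{R}_0$ and immediately asserts almost $G$-invariance, so the missing uniformity is not a defect peculiar to your write-up.
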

\begin{proof}
Take a neighborhood $U$ of the neutral element $e \in G$ 
as in the definition of the cross section $X_1$ for $G\curvearrowright (X, \mu)$.
Fix Borel maps 
$\gamma \colon X \to G$
and
$\pi \colon X \to X_1$
with 
$x = \gamma(x) \pi(x)$
for
$\text{a.e.~} x \in X$
as before.
Take a Borel subset $X_0$ in $(X_1, \mu_1)$ 
with $0 < \mu_1(X_0) < + \infty$.
Set $\mathcal{R}_0 \coloneqq \mathcal{R}_1 \cap (X_0 \times X_0)$.

First, we show that $\mathcal{R}_0$ is strongly ergodic on $(X_0, \mu_1)$.
Let 
$(f_n)_n \subset L^{\infty}(X_0, \mu_1)$
be any almost $\mathcal{R}_0$-invariant sequence. 
Since $G\curvearrowright (X, \mu)$ is ergodic by the strong ergodicity of $G\curvearrowright (X, \mu)$,
$\mathcal{R}_1$ is ergodic.
Then we choose a partition $X_1 = \bigcup_{i\in I} X_i$ up to measure zero and
Borel isometries $\phi_i \in [\mathcal{R}_1]$ with $\phi_i(X_i) \subset X_0$ and $\phi_i(\omega) = \omega$ for all $\omega \in X_0$.
For $x \in X$, 
there exists a unique $i \in I$ such that $\pi(x) \in X_i$,
and we have $\phi_i(\pi(x)) \in X_0$.
For each $n$, 
we define a map $F_n \in L^{\infty}(X, \mu)$ by $F_n(x) := f_n(\phi_i(\pi(x)))$ ($x \in X$). 
Then $(F_n)_n \subset L^{\infty}(X, \mu)$ is an almost $G$-invariant sequence.
In fact,
for each $x \in X$ and $g \in G$,
there exist $i, j \in I$ such that 
$\phi_i(\pi(x)) \in X_0$ and $\phi_j(\pi(gx)) \in X_0$.
Since $\phi_i, \phi_j \in [\mathcal{R}_1]$, we have $(\phi_i(\pi(x)), \phi_j(\pi(gx))) \in \mathcal{R}_0$.
Thus, $(F_n)_n$ is almost $G$-invariant on $(X, \mu)$.
It follows from our assumption that $(F_n)_n$ is a trivial sequence on $(X, \mu)$.
We prove that $(f_n)_n$ is a trivial sequence on $(X_0, \mu_1)$.
Fix $\varepsilon > 0$.
Then there exists a sequence $(\lambda_n)_n \subset \mathbb{C}$ 
such that $F_n - \lambda_n \to 0$ in the measure topology on $(X, \mu)$.
Set 
$Y_n \coloneqq \Set{x \in X|\, |F_n(x)-\lambda_n| > \varepsilon}$
and 
$Z_n \coloneqq \Set{\omega \in X_0|\, |f_n(\omega)-\lambda_n| > \varepsilon}$.
Now, note that
\begin{align}\label{equation_trivial_sequence_1}
&\int_{X} 1_{UX_0}(x) D(\gamma(x)^{-1}, x) \, \mathrm{d}\mu(x)
= \lambda_{G}(U) \mu_1(X_0) 
< + \infty, \\
&\int_{X} 1_{UZ_n}(x) D(\gamma(x)^{-1}, x) \, \mathrm{d}\mu(x)
=\lambda_{G}(U) \mu_1(Z_n) \leq \lambda_{G}(U) \mu_1(X_0) 
< + \infty 
\end{align}
by the equation (\ref{equation_cross_measure}) and the injectivity of the action map 
$U \times X_1 \ni (u, \omega) \mapsto u \cdot \omega \in X$.
Since $\lim_{n \to \infty} \mu(Y_n)=0$ and $U\cdot Z_n \subset Y_n$,
$\lim_{n \to \infty} \mu(U\cdot Z_n)= 0$.
Thus, 
$\lim_{n \to \infty} \mu_1(Z_n) =0$
by the equations \eqref{equation_trivial_sequence_1}, (3.2) and 
the absolute continuity of 
$E \mapsto \int_{X} 1_{E} 1_{UX_0} D(\gamma(x)^{-1}, x)\, \mathrm{d}\mu(x)$.
It follows that $(f_n)_n \subset L^{\infty}(X_0, \mu_1)$ is a trivial sequence.
Hence 
$\mathcal{R}_0$
is strongly ergodic on $(X_0, \mu_1)$.

Second,
we show that
$\mathcal{R}_1$ is strongly ergodic on $(X_1, \mu_1)$.
In general, 
an ergodic equivalence relation $\mathcal{R}$ on $(X, \mu)$ is strongly ergodic 
if and only if 
$L(\mathcal{R})' \cap L^{\infty}(X, \mu)^{\omega} = \mathbb{C}$. 
Moreover, 
by \cite[Lemma 2.10]{MT16},
for a separable von Neumann algebra $M$ and any projection $p \in M^{P}$,
we have $(pMp)^{\omega} \cong pM^{\omega}p$. 
Now, put $p \coloneqq 1_{X_0} \in L^{\infty}(X_1, \mu_1)^{P}$.
Since 
$\mathcal{R}_0$ is strongly ergodic on $(X_0, \mu_1)$,
$L(\mathcal{R}_0) = pL(\mathcal{R}_1)p$ and $L^{\infty}(X_0, \mu_1) = L^{\infty}(X_1, \mu_1)p$,
we have $L(\mathcal{R}_1)' \cap L^{\infty}(X_1, \mu_1)^{\omega} = \mathbb{C}$.
It follows that a cross section equivalence relation $\mathcal{R}_1$ is strongly ergodic on $(X_1, \mu_1)$. 
\end{proof}
\begin{thm}\label{mainthm}
Let $\mathcal{R}$ be any non-singular countable Borel equivalence relation 
on a non-atomic standard probability space $(X, \mu)$. 
If 
$\mathcal{R}$
is strongly ergodic
and
has property (S),
then
$L(\mathcal{R})$ is a full factor.
\end{thm}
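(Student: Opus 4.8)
The plan is to deduce fullness from the two characterizations recalled in Subsection 2.4. Since strong ergodicity implies ergodicity, $L(\mathcal{R})$ is a factor, so it suffices to show $L(\mathcal{R})_\omega=\mathbb{C}1$. Write $M=L(\mathcal{R})$, $A=L^\infty(X,\mu)$, and let $E\colon M\to A$ be the canonical faithful normal conditional expectation, inducing a conditional expectation $E^\omega\colon M^\omega\to A^\omega$ onto the (embedded) ultraproduct $A^\omega=L^\infty(X)^\omega\subset M^\omega$. Using the standard identification $M_\omega=M'\cap M^\omega$ (see \cite{AH14}), the whole proof reduces to the single inclusion $M_\omega\subseteq A^\omega$: indeed, granting it, strong ergodicity gives $M_\omega=M_\omega\cap A^\omega\subseteq(M'\cap M^\omega)\cap A^\omega=L(\mathcal{R})'\cap L^\infty(X)^\omega=\mathbb{C}1$. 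Equivalently, I must show that every asymptotically central bounded sequence $(x_n)_n$ is asymptotically supported on the Cartan subalgebra, i.e. $x_n-E(x_n)\to 0$ $\ast$-strongly along $\omega$.

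This is where property (S) enters, and it is the heart of the matter. I would use the functions $\eta(x,y)\in\ell^1([x]_{\mathcal{R}})$ supplied by property (S): passing to the square roots $\sqrt{\eta(x,y)}\in\ell^2([x]_{\mathcal{R}})$ and invoking the Powers--Størmer inequality, the locally bounded variation of $\eta$ under $[\mathcal{R}]\times[\mathcal{R}]$ yields a family of operators on $L^2(\mathcal{R},\mu_l)$, finite rank relative to $A$, whose commutators with the $u_s$ ($s\in[\mathcal{R}]$) and with $A$ are compact relative to $A$. This is the equivalence-relation analogue of the Akemann--Ostrand property: the commuting left and right representations of $M$ on $L^2(\mathcal{R})$ combine, modulo operators compact relative to $A$, into a single $\ast$-representation continuous for the minimal tensor norm. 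In bimodule language it exhibits $L^2(M)\ominus L^2(A)$ as weakly contained, as an $M$-$M$ bimodule, in the coarse $A$-relative bimodule $L^2(M)\otimes_A L^2(M)$.

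Granting this weak containment, the inclusion $M_\omega\subseteq A^\omega$ follows from a spectral gap argument. If $(x_n)_n$ is central then the vectors $\xi_n:=(x_n-E(x_n))$, viewed in $L^2(M)\ominus L^2(A)$, form a bounded asymptotically $M$-central sequence; by the weak containment they transfer to asymptotically $M$-central vectors of the same norm in $L^2(M)\otimes_A L^2(M)$. Now $\mathcal{R}$ is non-amenable, since an amenable equivalence relation on a non-atomic space is never strongly ergodic, so $M$ is not amenable relative to $A$; consequently the coarse $A$-relative bimodule admits no non-zero asymptotically central bounded sequence of vectors. This forces $\|\xi_n\|_\varphi\to 0$ (and likewise for the symmetrized norm), that is $x_n-E(x_n)\to 0$ $\ast$-strongly, which is exactly $M_\omega\subseteq A^\omega$.

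I expect the main obstacle to be the construction and correct type III formulation of this relative Akemann--Ostrand condition out of property (S). One must carry out all the Hilbert space estimates with respect to $\|\cdot\|_\varphi$ and its symmetrized version rather than a trace, keep careful track of the modular data entering $L^2(M)\otimes_A L^2(M)$, verify that the operators built from $\eta$ are genuinely compact relative to $A$ (using the locally bounded variation condition together with Lemma~\ref{lem_Deprez} to pass to a finite-measure corner if needed), and make rigorous both the passage from a weakly central sequence to asymptotically central bimodule vectors and the vanishing of such vectors in the non-relatively-amenable coarse bimodule.
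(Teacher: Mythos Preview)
Your overall reduction is correct: it suffices to show $M_\omega\subseteq A^\omega$, and this is also the paper's starting point, via \cite[Lemma~5.1]{HI15}, which from non-fullness plus strong ergodicity produces a central sequence of unitaries $(u_n)\subset M^U$ with $\|E_A(xu_ny)\|_\tau\to 0$ for all $x,y\in M$. From there, however, the paper takes a genuinely different route from yours. Rather than building a relative Akemann--Ostrand structure and running a bimodule spectral-gap argument, it follows Ozawa's direct method from \cite{Oz16}: writing $u_n$ via its kernel $F_n$ on $\mathcal{R}$, one defines u.c.p.\ maps
\[
\Phi_n(f)(x)\;=\;\sum_{y,z\in[x]_{\mathcal{R}}}|F_n(x,z)|^2\,\eta(x,z)_{(y)}\,f(x,y)
\]
and proves (this is the paper's Lemma~\ref{keylem}) that $\|\Phi_n(f)^{s}-\Phi_n(f^{s})\|_{L^1(X)}\to 0$ for every $s\in[\mathcal{R}]$. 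Property~(S) controls the term where $\eta$ is shifted; the off-diagonal decay $E_A(xu_ny)\to 0$ kills the contribution of the bounded part of the locally bounded set, and centrality of $(u_n)$ handles the remaining commutator term. Any pointwise-ultraweak cluster point $\Phi$ is then a left invariant mean, so $\mathcal{R}$ is amenable, contradicting strong ergodicity on a non-atomic space.

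Your approach, following the original \cite{HI15} template via relative (AO) and weak containment of $L^2(M)\ominus L^2(A)$ in $L^2(M)\otimes_A L^2(M)$, is more structural but carries a real risk here. The step ``the left and right representations combine, modulo $A$-compacts, into a $\ast$-representation continuous for the \emph{minimal} tensor norm'' is precisely where exactness is normally consumed; the theorem assumes only property~(S) for $\mathcal{R}$, with no exactness-type hypothesis, and Ozawa wrote \cite{Oz16} exactly to bypass this point for groups. Unless you can show that property~(S) alone for equivalence relations yields the needed min-continuity (which is not in the literature and is not obvious), your route proves a weaker statement than the one claimed. A second, smaller issue: the sentence ``by the weak containment they transfer to asymptotically $M$-central vectors of the same norm'' is imprecise---weak containment transfers coefficient functionals, not vectors, and extracting almost-central vectors in the target bimodule from an $M$-central limit state needs an extra argument. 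Both difficulties disappear in the paper's direct invariant-mean construction, which is why that route was chosen.
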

\begin{proof}
By contradiction,
assume that the factor $L(\mathcal{R})$ is not full.
We will write $M \coloneqq L(\mathcal{R})$.
We set 
$A \coloneqq L^{\infty}(X, \mu) \subset M$
and 
denote by
$E_A \colon M \to A$ the unique faithful normal conditional expectation. 
Fix any faithful state $\tau \in A_{\ast}$ and put 
$\varphi \coloneqq \tau \circ E_A \in M_{\ast}$.
Since $\mathcal{R}$ is strongly ergodic and $M$ is not full,
by \cite[Lemma 5.1]{HI15},
there exists a central sequence of unitaries 
$(u_n)_n \subset M^{U}$
such that
$\lim_{n\to\infty}\|E_A(xu_ny)\|_{\tau}=0$
for all $x, y \in M$.

Modifying the method of \cite[Example 8]{Oz16},
we will construct a left invariant mean 
$\Phi \colon L^{\infty}(\mathcal{R}, \mu_l) \to L^{\infty}(X, \mu)$
on $\mathcal{R}$.
Since $\mathcal{R}$ has property (S),
there exist functions $\eta(x, y) \in {\ell}^1([x]_{\mathcal{R}})$
for $(x, y) \in \mathcal{R}$
as in Subsection \ref{subsection_(S)}. 
Take a measurable function $F_n$ on $\mathcal{R}$ so that  
\begin{align*}
(u_n\xi)(x, y)= \sum_{z \in [x]_{\mathcal{R}}}
F_n(x, z) \xi(z, y)
\end{align*}
for
$\xi \in L^2(\mathcal{R}, \mu_l)$
and 
$(x, y) \in \mathcal{R}$.
Then 
$\sum_{z \in [x]_{\mathcal{R}}}|F_n(x, z)|^2=1$.
For each $n$,
we define a map
$\Phi_n \colon L^{\infty}(\mathcal{R}, \mu_{l}) \to L^{\infty}(X, \mu)$
by 
\begin{equation}\label{equation_goal_ucp}
\dsp
\Phi_n(f)(x) 
:=
\sum_{y, z \in [x]_{\mathcal{R}}} |F_n(x, z)|^2\eta(x, z)_{(y)}f(x, y)
\end{equation}
for 
$f \in L^{\infty}(\mathcal{R}, \mu_{l})$
and
$x \in X$.
Note that $\Phi_n$ is a u.c.p.~map. 
By Lemma \ref{keylem} below,  we get the equation
\begin{align}\label{key_equi}
\dsp\lim_{n \to \infty}\|\Phi_n(f)^{s}- \Phi_n(f^{s})\|_{L^1(X, \mu)} = 0\hspace{3mm}
\text{for all}\hspace{3mm}
 s \in [\mathcal{R}].
\end{align} 
We denote $\Phi \colon L^{\infty}(\mathcal{R}, \mu_l) \to L^{\infty}(X, \mu)$ by
the clustar point of $(\Phi_n)_n$ with respect to the
pointwise ultraweak topology.
Note that $\Phi$ is a u.c.p.~map.
By the equation (\ref{key_equi}),
we have
$\Phi(f^{s})= \Phi(f)^{s}$
for every 
$f \in L^{\infty}(X, \mu)$
and
$s \in [\mathcal{R}]$. 

Fix $s \in [[\mathcal{R}]]$.
Then there exists a partition 
$\{E_n\}_n$ 
of 
$\mathrm{Dom}(s)$
and
$\{s_n\}_n \subset [\mathcal{R]}$
such that 
$s_n(E_n)=: F_n$
are disjoint and
$s(x)= s_n(x)$
for
$x \in E_n$.
Now, we have
\begin{align*}
&1_{F_n} \Phi(f^{s})
= \Phi((1_{F_n}\circ \pi_l)f^{s})
=\Phi(((1_{E_n}\circ \pi_l)f)^{s_n})\\
=&\Phi((1_{E_n}\circ \pi_l)f)^{s_n}
=(1_{E_n}\Phi(f))^{s_n}
=1_{F_n}\Phi(f)^{s}
\end{align*}
where 
$\pi_l(x, y) = x$ for $(x, y) \in X \times X$.
Hence 
$\Phi(f^{s})=\Phi(f)^{s}$
on
$\mathrm{Im}(s).$
With
$F:= X \setminus \mathrm{Im}(s)$,
we have
$1_F \Phi(f)^{s}=0$
and
\begin{align*}
1_F\Phi(f^{s})=\Phi((1_F\circ\pi_l)f^{s})\Phi(0)=0.
\end{align*}
Hence we conclude 
$\Phi(f^{s})=\Phi(f)^{s}$
for all 
$s \in [[\mathcal{R]]}$.
This implies that $\mathcal{R}$ is amenable,
which contradicts the strong ergodicity of $\mathcal{R}$.
\end{proof}
\begin{lem}\label{keylem}
On the u.c.p.~map $\Phi_n$ defined in (\ref{equation_goal_ucp}),
we have
\[\dsp\lim_{n \to \infty}\|\Phi_n(f)^{s}- \Phi_n(f^{s})\|_{L^1(X, \mu)} = 0\]
for all $s \in [\mathcal{R}]$ and $f \in L^{\infty}(\mathcal{R}, \mu_l)$.
\begin{proof}
Fix $s \in [\mathcal{R}]$.
On the one hand, 
we have
\begin{align*}
\Phi_n(f)^{s}(x)
&= \Phi_n(f)(s^{-1}(x)) \\
&= \sum_{y, z \in [s^{-1}(x)]_{\mathcal{R}}=[x]_{\mathcal{R}}}
|F_n(s^{-1}(x), z)|^2\eta(s^{-1}(x), z)_{(y)}f(s^{-1}(x), y) .
\end{align*}
On the other hand, 
we have
\begin{align*}
\Phi_n(f^{s})(x)
&= \sum_{y , z\in [x]_{\mathcal{R}}}|F_n(x, z)|^2\eta(x, z)_{(y)}f(s^{-1}(x), y) \\
&= \sum_{y , z\in [x]_{\mathcal{R}}}|F_n(x, s(z))|^2\eta(x, s(z))_{(y)}f(s^{-1}(x), y) \\
&= \sum_{y , z\in [x]_{\mathcal{R}}}|F_n(x, s(z))|^2 \eta(s^{-1}(x), z)_{(y)} f(s^{-1}(x), y)\\
&\hspace{1cm}+
\sum_{y , z\in [x]_{\mathcal{R}}}|F_n(x, s(z))|^2 \left(\eta(x, s(z))_{(y)} - \eta(s^{-1}(x), z)_{(y)}\right)
f(s^{-1}(x), y) .
\end{align*}
This implies that
\begin{align} \dsp 
&\|\Phi_n(f)^{s}- \Phi_n(f^{s})\|_{L^1(X, \mu)} \notag\\
= 
&\int_X |\Phi_n(f)^{s}(x)- \Phi_n(f^{s})(x)| \;\mathrm{d}\mu (x) \notag\\
\leq 
&\int_X 
\sum_{y, z \in [x]_{\mathcal{R}}}
\left|
\left(|F_n(s^{-1}(x), z)|^2-|F_n(x, s(z))|^2 \right)
\eta(s^{-1}(x), z)_{(y)}f(s^{-1}(x), y)
\right| 
\;\mathrm{d}\mu (x) \tag{A}\\
&+ 
\int_X
\sum_{y , z\in [x]_{\mathcal{R}}}
\left|
|F_n(x, s(z))|^2 
\left(\eta(x, s(z))_{(y)} - \eta(s^{-1}(x), z)_{(y)}\right)
f(s^{-1}(x), y)
\right|
\;\mathrm{d}\mu (x)  \tag{B}.
\end{align}
We will compute (A).
We now have,
for $x \in X$,
\begin{align*}\dsp
&\sum_{y, z \in [x]_{\mathcal{R}}}
\left|
|F_n(s^{-1}(x), z)|^2-|F_n(x, s(z))|^2
\eta(s^{-1}(x), z)_{(y)} 
\right|  \\
=&
\sum_{z \in [x]_{\mathcal{R}}}
\Big( \sum_{y \in [x]_{\mathcal{R}}} \eta(s^{-1}(x), z)_{(y)} \Big)
\left|
|F_n(s^{-1}(x), z)|^2-|F_n(x, s(z))|^2
\right| \\
=&
\sum_{z \in [x]_{\mathcal{R}}}
\left|
|F_n(s^{-1}(x), z)|^2-|F_n(x, s(z))|^2 \right|\\
=&
\sum_{z \in [x]_{\mathcal{R}}}
\left|
\left(|F_n(s^{-1}(x), z)|-|F_n(x, s(z))|)\cdot(|F_n(s^{-1}(x), z)|+|F_n(x, s(z))|\right)
\right|\\
\leq&
\left(
\sum_{z \in [x]_{\mathcal{R}}}
\left|
|F_n(s^{-1}(x), z)|-|F_n(x, s(z))|
\right|^2
\right)^{\frac{1}{2}}
\left(\sum_{z \in [x]_{\mathcal{R}}}
\left|
|F_n(s^{-1}(x), z)|+|F_n(x, s(z))|
\right|^2
\right)^{\frac{1}{2}} \\
\leq& 
2\|(u_n u_{s}(1_{\Delta}))(x, \cdot)- (u_{s}u_n(1_{\Delta}))(x, \cdot)\|_{{\ell}^2([x]_{\mathcal{R}})}.
\end{align*}
At the last inequality,
we used the following computation:
\begin{align*}
&\sum_{z \in [x]_{\mathcal{R}}}
\left|
|F_n(s^{-1}(x), z)|-|F_n(x, s(z))|
\right|^2\\
\leq&
\sum_{z \in [x]_{\mathcal{R}}}
\left|
F_n(s^{-1}(x), z)-F_n(x, s(z))
\right|^2\\
=&
\sum_{z \in [x]_{\mathcal{R}}}
\left|
\sum_{u \in [s^{-1}(x)]_{\mathcal{R}}}
F_n(s^{-1}(x), u)1_{\Delta}(u, z)
-
\sum_{u \in [x]_{\mathcal{R}}}
F_n(x,u)1_{\Delta}(s^{-1}(u), z)
\right|^2\\
=&
\sum_{z \in [x]_{\mathcal{R}}}
\left|
u_{s}u_n(1_{\Delta})(x, z) - u_nu_{s}(1_{\Delta})(x,z)
\right|^2,
\end{align*}
and
\[
\left(\sum_{z \in [x]_{\mathcal{R}}}
\left|
|F_n(s^{-1}(x), z)|+|F_n(x, s(z))|
\right|^2
\right)^{\frac{1}{2}}
\leq 2
\]
since
$u_n \in M$ is a unitary
which satisfies
$(u_n\xi)(x, y)= \sum_{z \in [x]_{\mathcal{R}}}
F_n(x, z) \xi(z, y)$.
By the centrality of
$(u_n)_n \subset M$,
we obtain
\begin{align*}
\text{(A)}
=&\int_X 
\sum_{y, z \in [x]_{\mathcal{R}}}
\left|
(|F_n((s^{-1}(x), z))|^2-|F_n(x, s(z))|^2 )
\eta(s^{-1}(x), z)_{(y)}f(s^{-1}(x), y)
\right| 
\;\mathrm{d}\mu (x) \\
\leq&
\|f\|_{\infty}
\int_X
\sum_{y , z\in [x]_{\mathcal{R}}}
\left|
\left(|F_n((s^{-1}(x), z))|^2-|F_n(x, s(z))|^2 \right)
\eta(s^{-1}(x), z)_{(y)}
\right|\;\mathrm{d}\mu (x)\\
\leq&
2\|f\|_{\infty}
\int_{X}
\|(u_n u_{s}(1_{\Delta}))(x, \cdot)- (u_{s}u_n(1_{\Delta}))(x, \cdot)\|_{{\ell}^2([x]_{\mathcal{R}})} \, \mathrm{d}\mu(x)\\
\leq&
2\|f\|_{\infty}
\left(
\int_{X}
\|(u_n u_{s}(1_{\Delta}))(x, \cdot)- (u_{s}u_n(1_{\Delta}))(x, \cdot)\|^2_{{\ell}^2([x]_{\mathcal{R}})} \, \mathrm{d}\mu(x)
\right)^{\frac{1}{2}}\\
=&
2\|f\|_{\infty}
\|u_nu_s(1_{\Delta}) - u_su_n(1_{\Delta})\|_{\tau}^{\frac{1}{2}}
\to 0
\hspace{5mm}
(n \to \infty).
\end{align*}
Next, we will compute (B).
Fix 
$\varepsilon>0$.
We set 
$\mathcal{W}
:=
\{(x, z) \in \mathcal{R}
\mid 
\|\eta(x, s(z)) - \eta(s^{-1}(x), z)\|_1
\geq \varepsilon\}
\subset 
\mathcal{R}$.
From the condition of $\eta$,
$\mathcal{W}$ 
is a locally bounded subset of 
$\mathcal{R}$.
Now, since $\mathcal{R}$ is non-singular,
take $0 < \delta$ ($< \varepsilon$) so that
$\mu(s(U)) < \varepsilon$
for every measurable subset $U \subset X$ with $\mu(U) < \delta$.
There exists 
a measurable subset
$E \subset X$ 
with
$\mu(X \setminus E) < \delta$
such that
$\mathcal{V}:= \mathcal{W} \cap (E \times E)$ is bounded.
Now, we prove the following claim.
\begin{claim*}
If we take $n_0 \in \mathbb{N}$ large enough,
then
we have
\[
\int_{\mathcal{W}} |F_{n}(x, s(z))|^2 \;\mathrm{d}\mu_l (x, z) < 5 \varepsilon,
\]
for all $n \geq n_0$.
\begin{proof}[Proof of claim]
Set $F\coloneqq X \setminus E$.
Since $(u_n)_n$ is a central sequence, 
we can find $n_1 \in \mathbb{N}$ large enough so that with
\[\left|\|1_{\Delta \cap (s(F)\times s(F))} u_{n_1}^{\ast} 1_{\Delta \cap (E \times E)} \|_{\varphi}^2-
\|u_{n_1}^{\ast} 1_{\Delta \cap (s(F)\times s(F))} 1_{\Delta \cap (E \times E)} \|_{\varphi}^2\right|< \varepsilon. \]
Note that 
for measurable subsets $E_1, E_2 \subset X$ and every $n \in \mathbb{N}$,
\begin{align*}
&\|1_{\Delta \cap (s(E_2)\times s(E_2))} u_n^{*} 1_{\Delta \cap (E_1 \times E_1)} \|_{\varphi}^2
= \int_{\mathcal{R} \cap (E_1 \times E_2)} |F_n(x, s(z))|^2  \;\mathrm{d}\mu_l (x, z) \\
&\|u_{n}^{\ast} 1_{\Delta \cap (s(E_2)\times s(E_2))} 1_{\Delta \cap (E_1\times E_1)} \|_{\varphi}^2
= \|1_{\Delta \cap (s(E_2)\times s(E_2))} 1_{\Delta \cap (E_1 \times E_1)} \|_{\tau}^2
\leq 
\mu(s(E_2)).
\end{align*}
Then we have
\begin{align}\label{ineq_1}
&\int_{\mathcal{W}\cap (E\times F)} |F_{n_1}(x, s(z))|^2 \;\mathrm{d}\mu_l (x, z)\notag\\
\leq&
\left|\int_{\mathcal{W}\cap (E\times F)} |F_{n_1}(x, s(z))|^2 \;\mathrm{d}\mu_l (x, z)
-\|1_{\Delta \cap (s(F)\times s(F))} 1_{\Delta \cap (E \times E)} \|_{\tau}^2\right| \notag\\
&+
\|1_{\Delta \cap (s(F)\times s(F))} 1_{\Delta \cap (E \times E)} \|_{\tau}^2
\notag\\
\leq&
\left|\int_{\mathcal{W}\cap (E\times F)} |F_{n_1}(x, s(z))|^2 \;\mathrm{d}\mu_l (x, z)
- \|1_{\Delta \cap (s(F)\times s(F))} 1_{\Delta \cap (E \times E)} \|_{\tau}^2\right|
+ \mu(s(F))\notag\\
=&
\left|\|1_{\Delta \cap (s(F)\times s(F))} u_{n_1}^{\ast} 1_{\Delta \cap (E \times E)} \|_{\varphi}^2-
\|u_{n_1}^{\ast} 1_{\Delta \cap (s(F)\times s(F))} 1_{\Delta \cap (E \times E)} \|_{\varphi}^2\right|
+ \mu(s(X \setminus E))\notag \\
<& \varepsilon +\varepsilon = 2\varepsilon.
\end{align}
Since $u_n$ is a unitary in $M$ which satisfies 
$u_n(\xi)(x, y)= \sum_{z \in [x]_{\mathcal{R}}}F_n(x, z) \xi(z, y)$,
we have
\begin{align}\label{ineq_2}
&\int_{\mathcal{W}\cap (F\times E)} |F_n(x, s(z))|^2 \;\mathrm{d}\mu_l (x, z) \notag
= \int_{F} \Big(\sum_{z \in [x]_{\mathcal{R}}\cap E}\left|F_n(x, s(z)) \right|^2 \Big) \, \mathrm{d}\mu(x)\\
\leq&
\mu(F)
= \mu(X \setminus E)
<\delta < \varepsilon
\end{align}
for all $n \in \mathbb{N}$.
Similarly, we get 
\begin{align}\label{ineq_3}
\int_{\mathcal{W}\cap (F\times F)} |F_n(x, s(z))|^2 \;\mathrm{d}\mu_l (x, z) < \varepsilon
\end{align}
for all $n \in \mathbb{N}$.
Now, 
we choose
$s_1, ..., s_N \in [[\mathcal{R}]]$
such that 
$\mathcal{V} \subset 
\bigcup_{i=1}^N \mathrm{graph}(s_i)$
by the boundedness of $\mathcal{V}$.
Since 
$\lim_n\|E_A(xu_ny)\|_{\tau}=0$
for all $x, y \in M$,
we can find $n_2 \in \mathbb{N}$ large enough so that with 
$\sum_{i=1}^N \|E_A(u_{n_2} u_s u_{s_i})\|_{\tau}^2 <\varepsilon$.
Then
\begin{align}\label{ineq_4}
&\int_{\mathcal{W}\cap(E \times E)} |F_{n_2}(x, s(z))|^2 \;\mathrm{d}\mu_l (x, z)
\leq
\sum_{i=1}^N
\int_{\mathrm{graph}(s_i)} |F_{n_2}(x, s(z))|^2 \;\mathrm{d}\mu_l (x, z)
\notag
\\
=&
\sum_{i=1}^N
\int_X
|F_{n_2}(x, s(s_i(x)))|^2 \;\mathrm{d}\mu(x)
=
\sum_{i=1}^N
\|E_A(u_{n_2} u_s u_{s_i})\|_{\tau}^2
< \varepsilon.
\end{align}
Set $n_0 \coloneqq \max\{n_1, n_2\}$.
By the inequalities (\ref{ineq_1}) -- (\ref{ineq_4}),
we obtain the following inequality, 
\[
\int_{\mathcal{W}} |F_{n_0}(x, s(z))|^2 \;\mathrm{d}\mu_l (x, z) < 5 \varepsilon.
\]
\end{proof}
\end{claim*}
Using the above claim,
we have the following, for all $n \geq n_0$,
\begin{align*}
\text{(B)}=
&\int_X
\sum_{y , z\in [x]_{\mathcal{R}}}
\left|
|F_n(x, s(z))|^2 (\eta(x, s(z))_{(y)} - \eta(s^{-1}(x), z)_{(y)})
f(s^{-1}(x), y)
\right|
\;\mathrm{d}\mu (x)\\
\leq&
\|f\|_{\infty}
\int_X 
\sum_{z \in [x]_{\mathcal{R}}}
|F_n(x, s(z))|^2
\|\eta(x, s(z)) - \eta(s^{-1}(x), z)\|_{{\ell}^{1}([x]_{\mathcal{R}})}
\;\mathrm{d}\mu (x)\\
=&
\|f\|_{\infty}
\int_{\mathcal{R}}
|F_n(x, s(z))|^2
\|\eta(x, s(z)) - \eta(s^{-1}(x), z)\|_{{\ell}^{1}([x]_{\mathcal{R}})}
\;\mathrm{d}\mu_l (x, z)\\
=&
\|f\|_{\infty}
\int_{\mathcal{R\setminus \mathcal{W}}}
|F_n(x, s(z))|^2
\|\eta(x, s(z)) - \eta(s^{-1}(x), z)\|_{{\ell}^{1}([x]_{\mathcal{R}})}
\;\mathrm{d}\mu_l (x, z)\\
&\hspace{1cm}+
\|f\|_{\infty}
\int_{\mathcal{W}}
|F_n(x, s(z))|^2
\|\eta(x, s(z)) - \eta(s^{-1}(x), z)\|_{{\ell}^{1}([x]_{\mathcal{R}})}
\;\mathrm{d}\mu_l (x, z)\\
\leq&
\|f\|_{\infty} \varepsilon 
+
2 \|f\|_{\infty}
\int_{\mathcal{W}} |F_n(x, s(z))|^2 \;\mathrm{d}\mu_l (x, z)\\
\leq&
 \|f\|_{\infty} \varepsilon + 10 \|f\|_{\infty} \varepsilon
= 11\|f\|_{\infty} \varepsilon. 
\end{align*}
Thus, 
we conclude
$\lim_{n \to \infty}\|\Phi_n(f)^{s}- \Phi_n(f^s)\|_{L^1(X)}=0$.
\end{proof}
\end{lem}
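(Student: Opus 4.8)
The plan is to expand both $\Phi_n(f)^s$ and $\Phi_n(f^s)$ as explicit sums over the orbit $[x]_{\mathcal{R}}$ and to estimate their difference term by term. First I would write $\Phi_n(f)^s(x) = \Phi_n(f)(s^{-1}(x)) = \sum_{y, z \in [x]_{\mathcal{R}}} |F_n(s^{-1}(x), z)|^2 \eta(s^{-1}(x), z)_{(y)} f(s^{-1}(x), y)$, using that $[s^{-1}(x)]_{\mathcal{R}} = [x]_{\mathcal{R}}$. For $\Phi_n(f^s)$ I would substitute $f^s(x, y) = f(s^{-1}(x), y)$ and then reindex the summation variable by the orbit bijection $z \mapsto s(z)$, which turns $\Phi_n(f^s)(x)$ into $\sum_{y, z \in [x]_{\mathcal{R}}} |F_n(x, s(z))|^2 \eta(x, s(z))_{(y)} f(s^{-1}(x), y)$. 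Subtracting and telescoping, the integrand of the difference is controlled by $\bigl(|F_n(s^{-1}(x), z)|^2 - |F_n(x, s(z))|^2\bigr)\eta(s^{-1}(x), z)_{(y)}$ plus $|F_n(x, s(z))|^2\bigl(\eta(s^{-1}(x), z)_{(y)} - \eta(x, s(z))_{(y)}\bigr)$, giving two pieces (A) and (B) to be bounded after integrating their absolute values over $X$.

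For the first term (A), the key resource is the centrality of $(u_n)_n$. Because each $\eta(s^{-1}(x), z)$ is a probability measure on the orbit, summing over $y$ removes the $\eta$-factor and leaves $\sum_z \bigl| |F_n(s^{-1}(x), z)|^2 - |F_n(x, s(z))|^2 \bigr|$. I would factor this difference of squares, apply Cauchy--Schwarz together with $\bigl| |a| - |b| \bigr| \le |a - b|$, and identify the matrix coefficients $u_s u_n(1_\Delta)(x, z) = F_n(s^{-1}(x), z)$ and $u_n u_s(1_\Delta)(x, z) = F_n(x, s(z))$. This bounds the orbit sum appearing in (A) by $2\|(u_n u_s(1_\Delta))(x, \cdot) - (u_s u_n(1_\Delta))(x, \cdot)\|_{\ell^2([x]_{\mathcal{R}})}$; integrating over $X$ and applying Cauchy--Schwarz in $\mu$ then shows that (A) is at most $2\|f\|_\infty \|u_n u_s(1_\Delta) - u_s u_n(1_\Delta)\|_\tau^{1/2}$, which tends to $0$ since $(u_n)_n$ is central and $u_s(1_\Delta) \in L^2(\mathcal{R})$ is a fixed vector.

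For the second term (B), the key resource is property (S) of $\mathcal{R}$. Fixing $\varepsilon > 0$, I would introduce $\mathcal{W} := \Set{(x, z) \in \mathcal{R} | \|\eta(x, s(z)) - \eta(s^{-1}(x), z)\|_1 \ge \varepsilon}$, which is locally bounded by the defining property of the family $\eta$. On $\mathcal{R} \setminus \mathcal{W}$ the $\ell^1$-difference of the $\eta$'s is below $\varepsilon$, contributing at most $\|f\|_\infty \varepsilon$ to (B); the whole difficulty is to show that $\int_{\mathcal{W}} |F_n(x, s(z))|^2 \, \mathrm{d}\mu_l(x, z)$ is small for large $n$. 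To this end I would use local boundedness together with non-singularity of $\mathcal{R}$ to pick $E \subset X$ with $\mu(X \setminus E)$ small enough that $\mathcal{V} := \mathcal{W} \cap (E \times E)$ is bounded, and split $\mathcal{W}$ into the four pieces cut out by $E \times E$, $E \times F$, $F \times E$, $F \times F$ with $F := X \setminus E$. The three pieces meeting $F$ are controlled either directly by $\mu(F)$ (using $\sum_z |F_n(x, s(z))|^2 = 1$ for fixed $x$, since each $u_n$ is unitary) or by a single centrality comparison of two $\varphi$-norms. On the bounded piece $\mathcal{V}$ I would cover $\mathcal{V}$ by finitely many graphs $\mathrm{graph}(s_1), \dots, \mathrm{graph}(s_N)$ with $s_i \in [[\mathcal{R}]]$ and rewrite the integral as $\sum_{i=1}^N \|E_A(u_n u_s u_{s_i})\|_\tau^2$, which tends to $0$ by the hypothesis $\lim_n \|E_A(x u_n y)\|_\tau = 0$. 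Adding the four contributions gives $\int_{\mathcal{W}} |F_n(x, s(z))|^2 \, \mathrm{d}\mu_l < 5\varepsilon$ for $n$ large, whence the term (B) is at most $11\|f\|_\infty \varepsilon$; letting $\varepsilon \to 0$ completes the estimate.

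The hard part will be the claim bounding $\int_{\mathcal{W}} |F_n(x, s(z))|^2 \, \mathrm{d}\mu_l$, since it is here that property (S) (through local boundedness of $\mathcal{W}$) and the two distinct smallness features of the central sequence---centrality itself and the vanishing of $\|E_A(x u_n y)\|_\tau$---must be combined; in particular the reduction to the bounded set $\mathcal{V}$ via partial transformations and the matching of the $F_n$-integral with the operator-valued quantities $\|E_A(u_n u_s u_{s_i})\|_\tau$ require care.
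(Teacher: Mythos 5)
Your proposal is correct and follows essentially the same route as the paper's own proof: the identical decomposition into (A) and (B) via the reindexing $z \mapsto s(z)$, the identification of $F_n(s^{-1}(x),z)$ and $F_n(x,s(z))$ with the coefficients of $u_s u_n(1_\Delta)$ and $u_n u_s(1_\Delta)$ to handle (A) by centrality, and for (B) the same splitting of the locally bounded set $\mathcal{W}$ into the four pieces cut out by $E$ and $F = X \setminus E$, with the bounded piece covered by graphs of $s_1,\dots,s_N \in [[\mathcal{R}]]$ and killed by $\lim_n \|E_A(x u_n y)\|_{\tau} = 0$. The only cosmetic difference is that on the piece $\mathcal{V}$ the paper bounds the integral by the sum over the covering graphs rather than rewriting it as an equality, but this does not affect the argument.
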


\begin{cor}\label{cormain}
Let $G$ be a locally compact group which has property (S) and 
$G\curvearrowright (X, \mu)$
a strongly ergodic essentially free
non-singular action
on a non-atomic standard probability space.
Then the crossed product von Neumann algebra 
$L^{\infty}(X, \mu) \rtimes G$
is a full factor. 
\end{cor}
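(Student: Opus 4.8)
The plan is to deduce the corollary from Theorem \ref{mainthm} by passing to a cross section equivalence relation. First I would fix a cross section $X_1 \subset X$ for $G \curvearrowright (X, \mu)$, with a neighborhood $U$ of $e$ and $\sigma$-finite measure $\mu_1$ as in Subsection \ref{subsection_cross_section}, and let $\mathcal{R}_1$ be the associated cross section equivalence relation. The structural isomorphism recalled there, namely $p(L^{\infty}(X) \rtimes G)p \cong L(\mathcal{R}_1) \mathotimes \mathbf{B}(L^2(U))$ with $p := 1_{U \cdot X_1}$, is what ties the crossed product to an object handled by the main theorem, so the whole argument reduces to showing that $L(\mathcal{R}_1)$ is a full factor and then transporting fullness back to $L^{\infty}(X) \rtimes G$.

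Next I would verify the hypotheses needed on $\mathcal{R}_1$. Strong ergodicity of $G \curvearrowright (X, \mu)$ implies ergodicity, so Lemma \ref{property (S)} applies and gives that $\mathcal{R}_1$ has property (S), while Lemma \ref{strongly ergodic} gives that $\mathcal{R}_1$ is strongly ergodic; ergodicity of the action also makes $L(\mathcal{R}_1)$ a factor. The only mismatch with Theorem \ref{mainthm} is that $\mu_1$ is merely $\sigma$-finite rather than a probability measure, so I would restrict to a Borel set $X_0 \subset X_1$ with $0 < \mu_1(X_0) < +\infty$, put $\mathcal{R}_0 := \mathcal{R}_1 \cap (X_0 \times X_0)$, and normalise $\mu_1|_{X_0}$ to a probability measure on the non-atomic standard space $X_0$. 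By Lemma \ref{lem_Deprez} the restriction $\mathcal{R}_0$ still has property (S), and the first part of the proof of Lemma \ref{strongly ergodic} shows that $\mathcal{R}_0$ is strongly ergodic. Theorem \ref{mainthm} then yields that $L(\mathcal{R}_0)$ is a full factor.

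Finally I would propagate fullness through the two reductions using Lemma \ref{lem_ultra}. Since $L(\mathcal{R}_0) = 1_{X_0} L(\mathcal{R}_1) 1_{X_0}$ is a corner of the factor $L(\mathcal{R}_1)$, fullness of $L(\mathcal{R}_0)$ gives fullness of $L(\mathcal{R}_1)$. Tensoring with the separable type I factor $\mathbf{B}(L^2(U))$ preserves fullness, so $L(\mathcal{R}_1) \mathotimes \mathbf{B}(L^2(U)) \cong p(L^{\infty}(X) \rtimes G)p$ is full; since $p(L^{\infty}(X) \rtimes G)p$ is a corner of $L^{\infty}(X) \rtimes G$, which is a factor by ergodicity and essential freeness of the action, a last application of Lemma \ref{lem_ultra} shows that $L^{\infty}(X) \rtimes G$ is full.

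I expect no deep obstacle here, since all the conceptual content lives in Theorem \ref{mainthm} and in Lemmas \ref{property (S)} and \ref{strongly ergodic}, and the corollary is a transfer argument. The points that require care are the measure-theoretic bookkeeping, namely ensuring that ergodicity is available wherever Lemma \ref{property (S)} and Lemma \ref{lem_Deprez} need it and handling the passage from the $\sigma$-finite $\mu_1$ to a genuine probability measure on $X_0$, together with checking that the separability and factoriality hypotheses of Lemma \ref{lem_ultra} hold at each of the three places where it is invoked.
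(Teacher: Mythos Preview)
Your proposal is correct and follows essentially the same route as the paper: pass to a cross section equivalence relation $\mathcal{R}_1$, use the structural isomorphism $p(L^{\infty}(X)\rtimes G)p \cong L(\mathcal{R}_1)\mathotimes \mathbf{B}(L^2(U))$, restrict to a finite-measure piece $X_0$ (the paper calls it $Y$ and writes $\mathcal{S}$ for the restricted relation), check property~(S) and strong ergodicity there via Lemmas~\ref{property (S)}, \ref{strongly ergodic} and~\ref{lem_Deprez}, apply Theorem~\ref{mainthm}, and then propagate fullness back using Lemma~\ref{lem_ultra}. The only cosmetic difference is that the paper cites \cite[Lemma~2.10]{MT16} to pass strong ergodicity from $\mathcal{R}_1$ down to the restriction, whereas you invoke the first half of the proof of Lemma~\ref{strongly ergodic}, which already establishes strong ergodicity of $\mathcal{R}_0$ directly; both justifications are valid.
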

\begin{proof}
Take a cross section $X_1 \subset X$ and
$\mathcal{R}_1$ the associated cross equivalence relation. 
Let $p:= 1_{U\cdot X_1}$ be a projection in $L^{\infty}(X, \mu)$, 
where $U$ is a neighborhood of the neutral element of $G$ such that the action map 
$U \times X_1 \to X$ is injective.
Thanks to  \cite[Lemma 4.5]{KPV15},
\begin{equation*}
p(L^{\infty}(X) \rtimes G)p \cong L(\mathcal{R}_1) \mathotimes \mathbf{B}(L^2(U)).
\end{equation*}
Thus,
it suffices to show that the factor $L(\mathcal{R}_1)$ is full by Lemma \ref{lem_ultra}. 
Take a non-negligible subset
$Y \subset X_1$
with finite measure for $\mu_1$.
We denote by 
the probability measure $\nu:= \mu_1(Y)^{-1}\mu_1$ on $Y$
and 
the countable equivalence relation 
$\mathcal{S}:= \mathcal{R}_1 \cap (Y \times Y)$
on
$(Y, \nu)$.
By Lemma \ref{property (S)} and Lemma \ref{strongly ergodic}, 
$\mathcal{R}_1$ has property (S) and is strongly ergodic,
hence
the relation $\mathcal{S}$
has property (S) by Lemma \ref{lem_Deprez}
and
is strongly ergodic by using \cite[Lemma 2.10]{MT16}.
Then,
it follows that $L(\mathcal{S})= qL(\mathcal{R}_1)q$ is a full factor by Theorem \ref{mainthm},
where $q\coloneqq 1_Y \in L^{\infty}(X_1, \mu_1)^{P}$.
Thus,
$L(\mathcal{R}_1)$ is full by Lemma \ref{lem_ultra},
hence the theorem is proved. 
\end{proof}
\subsection*{Acknowledgements}
The author would like to express his deep gratitude to his supervisor, 
Professor Reiji Tomatsu for his support and providing many insightful comments.
He is also grateful to Professor Yusuke Isono and  Professor Yoshimichi Ueda for helpful comments.
This work was supported by the Research Institute for Mathematical Sciences, an International Joint
Usage/Research Center located in Kyoto University.

\end{document}